\begin{document}
\providecommand{\keywords}[1]{\textbf{\textit{Keywords: }} #1}
\newtheorem{thm}{Theorem}[section]
\newtheorem{lemma}[thm]{Lemma}
\newtheorem{prop}[thm]{Proposition}
\newtheorem{cor}[thm]{Corollary}
\theoremstyle{definition}
\newtheorem{defi}[thm]{Definition}
\theoremstyle{remark}
\newtheorem{remark}[thm]{Remark}
\newtheorem{prob}[thm]{Problem}
\newtheorem{conjecture}[thm]{Conjecture}
\newtheorem{ques}[thm]{Question}

\newcommand{\cc}{{\mathbb{C}}}   
\newcommand{\ff}{{\mathbb{F}}}  
\newcommand{\nn}{{\mathbb{N}}}   
\newcommand{\qq}{{\mathbb{Q}}}  
\newcommand{\rr}{{\mathbb{R}}}   
\newcommand{\zz}{{\mathbb{Z}}}  
\newcommand{\K}{\mathbb{K}}
\newcommand{\ra}{\rightarrow}
\newcommand{\Gal}{\operatorname{Gal}}
\newcommand{\ord}{\operatorname{ord}}
\newcommand{\Fix}{\operatorname{Fix}}
\newcommand{\ab}{\operatorname{ab}}
\newcommand{\Aut}{\operatorname{Aut}}
\newcommand{\PSL}{\operatorname{PSL}}
\newcommand{\AGL}{\operatorname{AGL}}
\newcommand{\SL}{\operatorname{SL}}

\title{Covering groups of $M_{22}$ as regular Galois groups over $\mathbb{Q}$}
\author{Joachim K\"onig}
\address{Department of Mathematics Education, Korea National University of Education, Cheongju, South Korea}
\email{jkoenig@knue.ac.kr}
\begin{abstract}
We close a gap in the literature by showing that the full 
covering groups of the Mathieu group $M_{22}$ and of its automorphism group $\Aut(M_{22})$ 
occur as the Galois groups of $\mathbb{Q}$-regular Galois extensions of $\mathbb{Q}(t)$. 
\end{abstract}
\maketitle

\section{Introduction and main result}
The sporadic Mathieu groups are a source for various headaches in inverse Galois theory.
For example, $M_{23}$ is notorious for being the only remaining sporadic group not yet known to occur as a Galois group over $\mathbb{Q}$ (see, e.g., \cite[Chapter II, Thm.\ 10.3]{MM}), in particular surviving all attempts with the classical rigidity criteria, as well as some ``brute force" calculations (e.g., \cite{KoenigPhD}).
That the group $M_{22}$ also leads to open problems in inverse Galois theory may be somewhat less well-known. The group itself (as well as its automorphism group) have long been known to occur as regular Galois groups (i.e., Galois groups of $\mathbb{Q}$-regular extensions of $\mathbb{Q}(t)$), see \cite{Malle88}. However, the Schur multiplier of $M_{22}$ is unusually large, being cyclic of order $12$, and the regular inverse Galois problem for the group $12.M_{22}$ (as well as some of its quotients) seems to be open, see, e.g., \cite[Chapter IV, Thm.\ 7.14]{MM}. In fact, the only non-trivial covering group of $M_{22}$ realized regularly in the literature seems to be $3.M_{22}$ (due to Feit \cite{Feit}, cf.\ \cite[Chapter IV, Thm.\ 5.8]{MM}).

The main result of this paper is the following.
\begin{thm}
\label{thm:main}
The full covering groups $12.M_{22}$ of $M_{22}$ and $2.\Aut(M_{22})$ of $\Aut(M_{22})$ occur as the Galois group of a $\mathbb{Q}$-regular Galois extension of $\mathbb{Q}(t)$.
\end{thm}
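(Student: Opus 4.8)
The plan is to attack the regular inverse Galois problem for the covers $\tilde G$ through Nielsen classes, Hurwitz spaces, and the lifting invariant of Fried and Serre, thereby splitting the task into a geometric lifting problem and an arithmetic descent problem. Recall that a $\mathbb{Q}$-regular realization of a group $G$ with inertia classes $\mathbf C=(C_1,\dots,C_r)$ corresponds to a $\mathbb{Q}$-rational point on a Hurwitz space $\mathcal H(G,\mathbf C)$, whose connected components are the braid orbits on the Nielsen class of generating tuples $(g_1,\dots,g_r)$ with $g_i\in C_i$ and $\prod_i g_i=1$, and whose field of definition is controlled by the branch-cycle argument via the cyclotomic action $C_i\mapsto C_i^m$. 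For a central extension $1\to Z\to\tilde G\to G\to1$ the obstruction to lifting such a tuple to a generating product-one tuple in $\tilde G$ is the lifting invariant $s(\mathbf g)\in Z$, which is constant on braid orbits; a geometric lift exists exactly when some orbit carries the value $s(\mathbf g)=1$. Since the Schur multiplier of $M_{22}$ is $\zz/12\cong\zz/4\times\zz/3$ and $12.M_{22}$ is the fibre product $4.M_{22}\times_{M_{22}}3.M_{22}$, it is enough to produce one regular $M_{22}$-realization whose tuple has vanishing $\zz/12$-valued lifting invariant, equivalently (by the Chinese remainder theorem) vanishing $2$-part and $3$-part. Feit's realization of $3.M_{22}$ already settles the $3$-part, so the heart of the matter is the $2$-part, i.e.\ the quadruple cover $4.M_{22}$.

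Concretely, I would fix a rational tuple $\mathbf C$ of conjugacy classes of $M_{22}$, chosen so that its entries lift to elements of tightly controlled order in $4.M_{22}$, and then use the stored character table and covering groups in the GAP/Magma libraries to (a) compute the class structure constants and locate the generating product-one tuples, (b) enumerate the braid orbits on the Nielsen class, and (c) evaluate the lifting invariant on each orbit by lifting the entries to $4.M_{22}$ and $3.M_{22}$ and multiplying in the centre. The objective is a single braid orbit whose lifting invariant vanishes throughout $\zz/12$ and whose lifts generate the full cover. I expect this to be the main obstacle: controlling the $2$-part forces one to find classes whose minimal lifts have prescribed order and whose global product is trivial in the $\zz/4$-centre, and such tuples are scarce; a careless choice of $\mathbf C$ leaves a nonzero $2$-adic lifting invariant, which is precisely the phenomenon that has kept $12.M_{22}$ out of the literature.

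For the arithmetic descent I would invoke the branch-cycle argument once more, now for $\tilde G$: after checking that the lifted classes $\tilde{\mathbf C}$ are themselves rational in $\tilde G$, the Hurwitz space $\mathcal H(\tilde G,\tilde{\mathbf C})$ is defined over $\mathbb{Q}$, and a $\mathbb{Q}$-rational point is obtained either from rigidity (a single braid orbit on a genus-zero, three-point Nielsen class, whose reduced Hurwitz space is a rational point) or, when rigidity fails, by exhibiting a genus-zero component of the reduced Hurwitz space carrying a $\mathbb{Q}$-rational point. Crucially, in the regular setting the central kernel $Z$ introduces no cyclotomic obstruction of its own, in contrast to the situation over $\mathbb{Q}$ where a central $\zz/12$ would entangle with the $12$-th roots of unity; this is exactly why the regular problem is accessible while the full $\mathbb{Q}$-problem need not be.

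For $2.\Aut(M_{22})$ I would run the same scheme over $\Aut(M_{22})=M_{22}{:}2$, taking $\mathbf C$ to contain a class in the outer coset and to be invariant under the outer automorphism so that the $M_{22}$-data above extends consistently; lifting through the central $\zz/2$ of $2.\Aut(M_{22})$ is then governed by a single lifting invariant in $\zz/2$, whose vanishing I would verify by the identical computation in the stored double cover. In both cases the genuinely new content, and the step most likely to resist, is the explicit exhibition of a rational braid orbit with trivial $2$-part lifting invariant together with a rational point on the associated Hurwitz space; once such a configuration is found, the passage to a $\mathbb{Q}$-regular Galois extension of $\mathbb{Q}(t)$ is standard.
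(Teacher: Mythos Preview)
Your proposal is a research outline, not a proof: the crux---locating a Nielsen class in $M_{22}$ with trivial $\mathbb{Z}/4$-valued lifting invariant and then producing a $\mathbb{Q}$-rational point on the associated Hurwitz space for $4.M_{22}$---is left to a computation you have not carried out and whose feasibility you yourself flag as uncertain (``such tuples are scarce'', ``the step most likely to resist''). Even granting a tuple with vanishing lifting invariant, your descent step is not automatic: rationality of the lifted classes in $\tilde G$ gives a Hurwitz space over $\mathbb{Q}$, but rigidity typically breaks when passing to a central cover (centralizers grow by $|Z|$, class structure constants change), so you would be hunting for a rational point on a curve whose genus you have not bounded. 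Your scheme might succeed for some well-chosen $\mathbf{C}$, but nothing written here establishes that it does.

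The paper's argument proceeds along entirely different lines and never touches Nielsen classes or lifting invariants for the cover. It begins with an explicit rigid triple $(2B,5A,12A)$ in $\Aut(M_{22})$, yielding a concrete polynomial over $\mathbb{Q}(t)$; a quadratic base change gives an $M_{22}$-extension of $\mathbb{Q}(s)$ with inertia of orders $5,5,6$. The passage to $12.M_{22}$ is then treated as a chain of \emph{central embedding problems} over this fixed extension: V\"olklein's criterion (a subgroup split in $2.M_{22}$ supporting a degree-$1$ place) furnishes a regular $2.M_{22}$-extension with five branch points; a local-global analysis of specializations---checking split decomposition groups prime by prime, with a Newton polygon argument at $p=3$---produces infinitely many fibers over $\mathbb{Q}$ for which the $4.M_{22}$-embedding problem is solvable; Mestre's change-of-variable criterion for Brauer classes with at most five poles then lifts solvability from a single such fiber back up to a rational function field $\mathbb{Q}(u)$; and Feit's lifting criterion, applied to the three-point $\Aut(M_{22})$-cover, handles the $3$-part independently. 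The $2.\Aut(M_{22})$ case follows from the same specialization data via Mestre's criterion on the original three-point cover. This arithmetic route sidesteps the Hurwitz space of the cover entirely; its price is controlling local obstructions at each prime, but that is made tractable by having the polynomial in hand.
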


See Theorems \ref{thm:3point} and \ref{thm:3pointd} for a more detailed version of the result.
The proof is achieved via a combination of theoretical criteria and explicit computations. In particular, we give some new explicit polynomials with Galois group $\Aut(M_{22})$ and $M_{22}$ over $\mathbb{Q}(t)$, and investigate the behavior of their specializations. These are then used to show that certain embedding obstructions vanish.

In the assertion of Theorem \ref{thm:main}, some attention should be paid to the fact that, while the full covering group of a perfect group is unique up to isomorphism, a non-perfect group can have more than one full covering group. This happens, e.g., for the symmetric groups $S_n$ ($n\ge 5$, $n\ne 6$) which have two non-isomorphic double covers; and indeed, one may verify computationally that the same happens for $\Aut(M_{22})$. It will be clear though from the proofs (which use only the uniqueness of the stem covers of $M_{22}$) that all assertions hold for {\it both} stem extensions $2.\Aut(M_{22})$, whence we will refrain from introducing further notation for the two cases.

Note that the first explicit realizations with Galois group $M_{22}$ and $\Aut(M_{22})$ were presented by Malle in \cite{Malle88}; these remained the only ones for quite a while, until \cite{Koenig} gave new ones (in particular providing the first totally real realizations for these two groups). Another intriguing way to obtain $\Aut(M_{22})$-polynomials should at least be mentioned once: In \cite{BW} (Theorem 3.1 and Section 4.1.7), explicit rational functions $t=t(x)\in \mathbb{Q}(x)$ of degree $100$ (corresponding to degree-$100$ polynomials $F(t,x) = f(x)-tg(x)$) with monodromy group $\Aut(HS)$, the automorphism group of the sporadic Higman-Sims group, were computed. Since the point stabilizer in this group is isomorphic to $\Aut(M_{22})$, this means that, if $L/\mathbb{Q}(t)$ is the splitting field of such a polynomial $F(t,x)$, then $L/\mathbb{Q}(x)$ has Galois group $\Aut(M_{22})$.

{\bf Acknowledgement}:\\
I would like to thank Gunter Malle for helpful comments. 
This work was supported by the 2020 New Professor Research Grant funded by Korea National University of Education.

\section{Prerequisites}
\subsection{$M_{22}$ and its covering groups}
There was initially some confusion about the Schur cover of the Mathieu group $M_{22}$. In \cite{BF}, it was claimed that this maximal cover was $3.M_{22}$, and in a correction by the same authors, this was changed to $6.M_{22}$. Finally, in \cite{Mazet}, it was correctly identified as $12.M_{22}$. The Schur multiplier of $\Aut(M_{22})$ is of order $2$, as shown in \cite{Humphreys}. Note that, e.g., the triple cover $3.M_{22}$ can be extended to $3.\Aut(M_{22})$, but here the order $3$ normal subgroup is no longer central. 

For our purposes, in particular in view of Proposition \ref{prop:voe} and Lemma \ref{lem:prime_kernel} below, it is useful to know which subgroups $U$ of $M_{22}$ split in a covering group $C.M_{22}$ (with $C\le C_{12}$), in the sense that there exists a subgroup $\tilde{U}$ of $C.M_{22}$ mapping bijectively to $U$ under projection to $M_{22}$. We will only require the following (somewhat random-looking) results. These were verified with Magma (see \cite{Magma}), with code for all covering groups of $M_{22}$ available at \\ \texttt{http://brauer.maths.qmul.ac.uk/Atlas/v3/spor/M22/}.

\begin{prop}
\label{prop:split_subgps}
\begin{itemize}
\item[a)]
All subgroups of order $6$ of $M_{22}$ are split in $12.M_{22}$.
\item[b)]
All subgroups of order $1344$ in $M_{22}$ are split in $2.M_{22}$.
\end{itemize}
\end{prop}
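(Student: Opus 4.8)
The plan is to recast the splitting condition cohomologically and then to reduce each part to a small number of explicit checks inside the stem covers, which are available in computer-algebra form. Writing $\pi\colon C.M_{22}\to M_{22}$ for the covering projection with central kernel $C\le C_{12}$, a subgroup $U\le M_{22}$ is split precisely when the restricted central extension $1\to C\to\pi^{-1}(U)\to U\to 1$ is split, i.e.\ when its class in $H^2(U,C)$ vanishes; since $C$ is central, a splitting makes $\pi^{-1}(U)$ a direct product $C\times U$ and amounts to the existence of a complement of order $|U|$ in $\pi^{-1}(U)$. As $M_{22}$ is perfect, each $n.M_{22}$ with $n\mid 12$ is a quotient of the universal stem cover $12.M_{22}$, so all of these groups (and hence the preimages $\pi^{-1}(U)$) can be written down explicitly, and the existence of a complement is a finite, decidable question.

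For part a) I would first exploit the coprime splitting $C_{12}\cong C_4\times C_3$. Taking the quotients $4.M_{22}=12.M_{22}/C_3$ and $3.M_{22}=12.M_{22}/C_4$ identifies $12.M_{22}$ with the fibre product $4.M_{22}\times_{M_{22}}3.M_{22}$, and a straightforward diagram chase (or the decomposition $H^2(U,C_{12})\cong H^2(U,C_4)\oplus H^2(U,C_3)$) shows that $U$ splits in $12.M_{22}$ if and only if it splits in both $4.M_{22}$ and $3.M_{22}$. Every subgroup of order $6$ is isomorphic to $C_6$ or to $S_3$. The case of $S_3$ inside the triple cover is automatic, since $H^2(S_3,C_3)=0$ (as $M(S_3)=0$ and $\operatorname{Ext}(C_2,C_3)=0$); in each remaining case the relevant group $H^2(U,C_4)$, respectively $H^2(C_6,C_3)$, is nonzero, so splitting is not formal. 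Concretely, the surviving content is the \emph{lifting order} of the involutions and order-$6$ elements of $U$: for instance $C_6$ splits in $3.M_{22}$ exactly when an order-$6$ element lifts to an order-$6$ (rather than order-$18$) element, and $S_3$ or $C_6$ splits in $4.M_{22}$ exactly when its involution lifts to an involution. These lifting orders are read off from the power-map data of the stem covers.

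For part b) I would first pin down the subgroups in question. The only maximal subgroup of $M_{22}$ of order a proper multiple of $1344$ is $L_3(4)$, whose proper subgroups all have order at most $960<1344$; hence every subgroup of order $1344$ is itself maximal, namely a copy of $2^3{:}L_3(2)\cong\AGL_3(2)$, and these form a single conjugacy class. This group is perfect, since $L_3(2)$ is simple and acts irreducibly and nontrivially on the normal $2^3$, forcing $U^{\ab}=1$ and hence $H^2(U,C_2)\cong\operatorname{Hom}(M(U),C_2)$. Consequently $U$ splits in the double cover $2.M_{22}$ if and only if the composite $M(U)\to M(M_{22})=C_{12}\to C_2$ induced by the inclusion vanishes, i.e.\ if and only if the image of $M(U)$ in $C_{12}$ lands in the subgroup $2C_{12}=C_6$; in particular splitting is automatic as soon as $M(U)$ has odd order.

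The main obstacle is that none of the residual cases is resolved by a soft argument: each of the groups $H^2(U,C_4)$, $H^2(C_6,C_3)$ and $\operatorname{Hom}(M(U),C_2)$ is a priori nonzero, so a genuine (if elementary) computation inside the explicit stem covers is unavoidable. In practice I would carry out exactly the checks isolated above, namely the lifting orders of the involutions and order-$6$ elements for part a) and the restriction map $M(\AGL_3(2))\to C_{12}$ for part b), using the explicit covering groups of $M_{22}$; this is the verification recorded as having been performed in Magma.
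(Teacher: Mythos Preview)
Your proposal is correct and ultimately lands at the same place as the paper: a direct Magma verification inside the explicitly available stem covers of $M_{22}$. The paper gives no argument beyond that computational check, whereas you add a cohomological reformulation, the coprime decomposition $H^2(U,C_{12})\cong H^2(U,C_4)\oplus H^2(U,C_3)$, the identification of the order-$1344$ subgroups as the single class of maximal $\AGL_3(2)$'s, and the observation that a few cases (such as $S_3$ in $3.M_{22}$) are settled by $H^2(S_3,C_3)=0$ without any computation; this is genuinely more informative, but since the residual cases still require the explicit lift-order checks in Magma, the two approaches coincide in substance.
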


\subsection{Specialization of function field extensions}

Let $K$ be a field and $F/K(t)$ a finite Galois extension. Assume without loss of generality that $F$ and $\overline{K}$ lie in some common overfield. The extension $F/K(t)$ is called {\it $K$-regular} (or simply {\it regular}, if the base field is clear), if $F\cap \overline{K} = K$, where $\overline{K}$ denotes the algebraic closure of $K$. For any $t_0\in K\cup\{\infty\}$ and any place $\mathfrak{p}$ of $F$ extending the $K$-rational place $t\mapsto t_0$, we have a residue field extension $F_{t_0}/K$. This is a Galois extension, not depending on the choice of place $\mathfrak{p}$. We call it the {\it specialization} of $F/K(t)$ at $t_0$.

Now let $K$ be of characteristic zero, and let $F/K(t)$ be a $K$-regular Galois extension with group $G$. 
For each $t_i\in \overline{K}\cup\{\infty\}$, the {\it ramification index} of $F/K(t)$ at $t_i$ is the minimal positive integer $e_i$ such that $F$ embeds into $\overline{K}(((t-t_i)^{1/e_i}))$. Note that, if $t_i=\infty$, one should replace $t-t_i$ by $1/t$. If the ramification index is larger than $1$, then $t_i$ is called a {\it branch point} of $F/K(t)$.
The set of branch points is always a finite set.
Associated to each branch point $t_i$ (of ramification index $e_i$) is a unique conjugacy class $C_i$ of $G$, corresponding to the automorphism $(t-t_i)^{1/e_i}\mapsto \zeta (t-t_i)^{1/e_i}$ of 
$\overline{K}(((t-t_i)^{1/e_i}))$, where $\zeta$ is a primitive $e_i$-th root of unity. The ramification index then equals the order of elements in the class $C_i$. The class tuple $(C_1,\dots, C_r)$ for all branch points $t_1,\dots, t_r$ of $F/K(t)$ is called the {\it ramification type} of $F/K(t)$.

In order to proceed from regular realizations (over $\mathbb{Q}(t)$) to realizations over $\mathbb{Q}$ with certain properties, we will make use of the following well-known theorem relating inertia groups of a $K$-regular Galois extension with those in its specializations (cf.\ \cite[Theorem 1.2 and Prop.\ 4.2]{Beckmann}).

\begin{thm}
\label{thm:beck}
Let $K$ be a number field and $N/K(t)$ a $K$-regular Galois extension with Galois group $G$.
Then with the exception of finitely many primes, depending only on $N/K(t)$, the following holds for every prime $\mathfrak{p}$ of $K$.\\
If $t_0\in K$ is not a branch point of $N/K(t)$, then the following condition is necessary for $\mathfrak{p}$ to be ramified in the specialization $N_{t_0}/K$:
 $$\nu_i:=I_{\mathfrak{p}}(t_0, t_i)>0 \text{ for some (automatically unique, up to algebraic conjugates) branch point $t_i$.}$$
 Here $I_{\mathfrak{p}}(t_0,t_i)$ is the intersection multiplicity of $t_0$ and $t_i$ at the prime $\mathfrak{p}$.
Furthermore, $\nu_i>0$ implies that the inertia group of a prime extending $\mathfrak{p}$ in $N_{t_0}/K$ is conjugate in $G$ to $\langle\tau^{\nu_i}\rangle$, where $\tau$ is a generator of an inertia subgroup over the branch point $t\mapsto t_i$ of $N/K(t)$.
\end{thm}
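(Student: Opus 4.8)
The plan is to spread the cover out to a good integral model and then reduce everything modulo $\mathfrak{p}$, reading off both the necessity condition and the shape of the inertia group from the resulting local geometry. Concretely, let $\phi\colon \tilde{X}\to \mathbb{P}^1_K$ be the branched cover of smooth projective curves attached to $N/K(t)$, and let $t_1,\dots,t_r\in \overline{K}\cup\{\infty\}$ be its branch points, with $e_i$ the ramification index over $t_i$ and $\tau$ a generator of a corresponding inertia subgroup. The first step is to choose the finite exceptional set $S$ of primes of $K$ large enough that over $\mathcal{O}_{K,S}$ the cover extends to a finite map $\tilde{\mathcal{X}}\to \mathbb{P}^1_{\mathcal{O}_{K,S}}$ with $\tilde{\mathcal{X}}$ smooth, the horizontal branch divisor $\mathcal{B}$ associated to $t_1,\dots,t_r$ has components that remain distinct and stay étale over $\mathcal{O}_{K,S}$, and all ramification is tame. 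Thus $S$ must contain the primes dividing a fixed discriminant of the cover, the primes below which two branch points would collide, and the primes dividing any $e_i$; crucially, $S$ so chosen depends only on $N/K(t)$ and not on $t_0$, as the statement demands.

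For $\mathfrak{p}\notin S$, reduction modulo $\mathfrak{p}$ yields a tame $G$-cover of $\mathbb{P}^1$ over the residue field whose branch locus is the reduction $\overline{\mathcal{B}}$, still consisting of distinct points. The specialization $N_{t_0}/K$ is the fibre of this family along the section $s_{t_0}\colon \operatorname{Spec}\mathcal{O}_{K,S}\to \mathbb{P}^1_{\mathcal{O}_{K,S}}$ given by $t\mapsto t_0$. Since the cover is étale away from $\mathcal{B}$, a prime $\mathfrak{p}$ can ramify in $N_{t_0}$ only if $s_{t_0}$ meets the branch divisor in the fibre over $\mathfrak{p}$, that is, only if $t_0$ reduces to the same point as some branch point $t_i$ modulo a prime above $\mathfrak{p}$. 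This collision is exactly what the intersection multiplicity $I_{\mathfrak{p}}(t_0,t_i)$ measures, and the uniqueness (up to conjugacy) of the colliding $t_i$ follows from the separation of the branch components away from $S$. This establishes the asserted necessary condition $\nu_i>0$.

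It remains to identify the inertia group when $\nu_i>0$. Here I would pass to the completion at a prime above $\mathfrak{p}$ and use that, by tameness and good reduction, the cover over the formal neighbourhood of the branch component $t_i$ is, on each sheet, a standard Kummer extension $y^{e_i}=(t-t_i)\cdot u$ with $u$ a unit, whose inertia is $\langle\tau\rangle$ with $\tau\colon y\mapsto \zeta_{e_i}\,y$. Substituting $t=t_0$ gives $y^{e_i}=(t_0-t_i)\cdot u$, where $t_0-t_i$ has valuation $\nu_i$ at the relevant prime of the residue field of $K(t_i)$; the resulting tame extension of local fields is totally ramified of degree $e_i/\gcd(e_i,\nu_i)$, and since the Galois action is through $\tau$, its inertia subgroup is the unique subgroup of $\langle\tau\rangle$ of that order, namely $\langle\tau^{\nu_i}\rangle$. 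Accounting for the choices of prime above $\mathfrak{p}$ and of inertia subgroup, this yields that the inertia group of a prime extending $\mathfrak{p}$ in $N_{t_0}/K$ is conjugate in $G$ to $\langle\tau^{\nu_i}\rangle$, as claimed.

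The main obstacle, and the step requiring the most care, is the bookkeeping of the exceptional set $S$: one must simultaneously guarantee good reduction of $\tilde{\mathcal{X}}$, tameness of all inertia, and persistence of the entire branch configuration (including the behaviour at $\infty$ and the branch points defined only over proper extensions of $K$), while ensuring $S$ is finite and independent of $t_0$. A secondary subtlety is that the $t_i$ need not be $K$-rational, so both the collision analysis and the Kummer computation must be performed over the residue field of a prime above $\mathfrak{p}$ in $K(t_i)$; keeping the local valuation normalisation compatible with the global definition of $I_{\mathfrak{p}}(t_0,t_i)$ is the delicate point on which the precise exponent $\nu_i$ depends.
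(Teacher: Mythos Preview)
The paper does not prove this theorem at all: it is quoted as a known result with the reference ``cf.\ \cite[Theorem 1.2 and Prop.\ 4.2]{Beckmann}'', so there is no proof in the paper to compare against. Your sketch is essentially the standard argument (and close in spirit to Beckmann's own), namely spreading out to a smooth integral model with tame, \'etale-away-from-the-branch-locus reduction and then reading off inertia via a local Kummer computation; the subtleties you flag (bookkeeping of $S$, non-rational branch points, normalisation of $I_{\mathfrak{p}}$) are exactly the ones that need care in a full write-up.
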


A direct consequence of Theorem \ref{thm:beck} is the following.
\begin{cor}
\label{cor:beck}
Assume $N/K(t)$ has a specialization $N_{t_0}/K$ unramified outside a set $S$ of primes of $K$. Then there exist infinitely many specializations $N_{t_1}/K$ such that all ramified primes $\mathfrak{p}$ of $N_{t_1}/K$, with the possible exception of the ones in $S$,  behave as in Theorem \ref{thm:beck}; in particular, the inertia group at $\mathfrak{p}$ in $N_{t_1}/K$ is contained in the inertia group at some $t_i$ in $N/K(t)$. Furthermore, we may choose $t_1$ such that none of those primes $\mathfrak{p}$ for which $t_0$ has non-negative $\mathfrak{p}$-adic valuation have $t_i=\infty$ in Theorem \ref{thm:beck}.
\end{cor}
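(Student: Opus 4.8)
The plan is to separate the primes of $K$ into the finitely many ``bad'' primes for which Theorem \ref{thm:beck} gives no information, and the rest, for which it does. Write $T$ for the finite exceptional set of primes from Theorem \ref{thm:beck}, depending only on $N/K(t)$, and set $T' := T \cup S$. For primes outside $T'$ everything will be controlled by Theorem \ref{thm:beck} directly; for the primes in $T' \setminus S$ I will instead force the specialization at $t_1$ to imitate the (unramified) behavior of the given specialization at $t_0$ by a $\mathfrak{p}$-adic approximation; and for primes in $S$ I make no claim, as permitted by the statement.

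First I would fix, for each $\mathfrak{p} \in T'$, the étale $K_\mathfrak{p}$-algebra $N_{t_0} \otimes_K K_\mathfrak{p}$. By a standard continuity-of-roots argument (Krasner's lemma applied to a primitive element of $N_{t_0}/K$ and its minimal polynomial, whose coefficients vary $\mathfrak{p}$-adically continuously in the specialization parameter away from the branch points), there is an integer $N_\mathfrak{p}$ such that any $t_1 \in K$ with $v_\mathfrak{p}(t_1 - t_0) \ge N_\mathfrak{p}$ yields $N_{t_1} \otimes_K K_\mathfrak{p} \cong N_{t_0} \otimes_K K_\mathfrak{p}$; in particular $\mathfrak{p}$ has the same ramification in $N_{t_1}/K$ as in $N_{t_0}/K$. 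Since $N_{t_0}/K$ is unramified at every $\mathfrak{p} \in T' \setminus S$, so is $N_{t_1}/K$ for any such $t_1$.

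I would then choose $t_1 = t_0 + \varepsilon$, where $\varepsilon$ is any nonzero element of the integral ideal $\prod_{\mathfrak{p}\in T'} \mathfrak{p}^{N_\mathfrak{p}}$, so that $v_\mathfrak{p}(\varepsilon) \ge N_\mathfrak{p}$ for $\mathfrak{p} \in T'$ and $v_\mathfrak{p}(\varepsilon) \ge 0$ for all remaining $\mathfrak{p}$. This ideal is a full-rank lattice in $\mathcal{O}_K$, hence contains infinitely many elements, giving infinitely many distinct values $t_1 \in K$; discarding the finitely many that are branch points leaves infinitely many admissible $t_1$. By the previous paragraph, the primes of $T' \setminus S$ are unramified in $N_{t_1}/K$, so any prime $\mathfrak{p}$ ramifying in $N_{t_1}/K$ and lying outside $S$ lies outside $T$. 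Theorem \ref{thm:beck} then applies to it, giving $\nu_i = I_\mathfrak{p}(t_1, t_i) > 0$ for a branch point $t_i$, with inertia at $\mathfrak{p}$ conjugate to $\langle \tau^{\nu_i}\rangle \le \langle \tau\rangle$, the inertia group over $t_i$.

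Finally, for the additional assertion, note that for every prime $\mathfrak{p} \notin T'$ with $v_\mathfrak{p}(t_0) \ge 0$ we have $v_\mathfrak{p}(t_1) = v_\mathfrak{p}(t_0 + \varepsilon) \ge \min(v_\mathfrak{p}(t_0), v_\mathfrak{p}(\varepsilon)) \ge 0$, so $t_1$ reduces to a finite point modulo $\mathfrak{p}$ and hence $I_\mathfrak{p}(t_1, \infty) = 0$; since the relevant branch point in Theorem \ref{thm:beck} is unique, it cannot be $\infty$. As every ramified prime outside $S$ lies outside $T'$, this yields the claim for all such primes with $v_\mathfrak{p}(t_0)\ge 0$. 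The step I expect to be the crux is the $\mathfrak{p}$-adic approximation at the bad primes $T$: one must check carefully that proximity of the parameter forces agreement of the entire local algebra (not merely of, say, its discriminant), which is precisely where Krasner's lemma and the continuity of the specialization map enter; the remaining bookkeeping with intersection multiplicities and valuations is routine.
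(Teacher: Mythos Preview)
Your proof is correct and follows essentially the same approach as the paper's: approximate $t_0$ $\mathfrak{p}$-adically at the finitely many exceptional primes of Theorem~\ref{thm:beck} so that Krasner's lemma forces identical local behavior there, and then observe that $\mathfrak{p}$-integrality of $t_1$ prevents meeting $t_i=\infty$. The only cosmetic difference is that you approximate at $T'=T\cup S$ rather than just at $T$ (harmless, since no claim is made at $S$ anyway) and that you spell out the choice $t_1=t_0+\varepsilon$ with $\varepsilon$ in an explicit integral ideal, whereas the paper simply says ``a suitable $S_0$-adic neighborhood of $t_0$''.
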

\begin{proof}
This simply requires to choose $t_1$ in a suitable $S_0$-adic neighborhood of $t_0$, where $S_0$ is the set of exceptional primes for Theorem \ref{thm:beck}, in order to ensure (via Krasner's lemma) that $N_{t_0}/K$ and $N_{t_1}/K$ have the same local behavior at all primes in $S_0$. The second assertion follows since we may then choose $t_1$ $\mathfrak{p}$-integral as soon as $t_0$ is $\mathfrak{p}$-integral, whence it will not meet $t_i=\infty$ at $\mathfrak{p}$.
\end{proof}

\subsection{Embedding problems}
A {\it finite embedding problem} over a field $K$ is a pair $(\varphi:G_K\ra G,\varepsilon:\tilde{G} \ra G)$, where $\varphi$ is a (continuous) epimorphism from the absolute Galois group $G_K$ of $K$ onto $G$, 
and $\varepsilon$ is an epimorphism of finite groups. The kernel $\ker(\varepsilon)$ is called the kernel of the embedding problem. An embedding problem is called {\it central} if $\ker(\varepsilon) \le Z(\tilde{G})$, and {\it Frattini} if $\ker(\varepsilon)$ is contained in the Frattini subgroup of $\tilde{G}$.
A (continuous) homomorphism $\psi:G_K\ra \tilde{G}$ is called a \textit{solution} to $(\varphi, \varepsilon)$ if the composition  $\varepsilon \circ \psi$ equals $\varphi$. 
In this case, the fixed field of $\ker(\psi)$ is called a {\it solution field} to the embedding problem.
A solution $\psi$ is called a \textit{proper solution} if it is surjective. In this case, the field extension of the solution field over $K$ has full Galois group $\tilde{G}$. 
Given a finite vector ${\bf t} = (t_1,\dots, t_r)$ of independent transcendentals, an embedding problem $(\varphi:G_K\to G, \varepsilon)$ can be lifted to an embedding problem $(\varphi^\star: G_{K({\bf t})}\to G, \varepsilon)$ over $K({\bf t})$ by identifying the Galois group $G = \Gal(F/K)$ (where $F=\Fix(\ker(\varphi))$) with $\Gal(F({\bf t})/K({\bf t}))$.
A solution $\psi$ to the latter embedding problem is called {\it regular} if $F$ is algebraically closed in the fixed field of $\ker(\psi)$.

If $K$ is a number field and $\mathfrak{p}$ is a prime of $K$, every embedding problem $(\varphi, \varepsilon)$ induces an associated {\it local embedding problem} $(\varphi_{\mathfrak{p}}, \varepsilon_{\mathfrak{p}})$ defined as follows: $\varphi_{\mathfrak{p}}$ is the restriction of $\varphi$ to $G_{K_{\mathfrak{p}}}$ (well defined up to fixing an embedding of $\overline{K}$ into $\overline{K_{\mathfrak{p}}}$), and $\varepsilon_{\mathfrak{p}}$ is the restriction of $\varepsilon$ to $\varepsilon^{-1}(G({\mathfrak{p}}))$, where $G(\mathfrak{p}) := \varphi_{\mathfrak{p}}(G_{K_{\mathfrak{p}}})$.

We next recall some theoretical criteria for the solvability of certain embedding problems, which we will use in the following sections.
The first one deals with regular solutions of central embedding problems over $\mathbb{Q}^{\ab}$. See \cite[Chapter IV, Thm.\ 7.11 and Cor.\ 7.12]{MM}, or \cite[Criterion 2.2]{MS}). It is based on a local-global principle for Brauer embedding problems due to Sonn (\cite{SonnA}) and its application to fields with projective absolute Galois group (\cite{Sonn}).

\begin{prop}[Malle, Sonn]
\label{prop:ms}
Let $G$ be a finite group, $\tilde{G}=H.G$  a central extension, and let $L/\mathbb{Q}^{\ab}(t)$ be a $\mathbb{Q}^{\ab}$-regular $G$-extension with ramification type $C=(C_1,\dots, C_r)$ such that for each branch point $t_i\in \mathbb{P}^1(\overline{\mathbb{Q}})$ of $L/\mathbb{Q}^{\ab}(t)$, except possibly $t_i=\infty$, one of the following holds, with $\sigma_i\in C_i$ an inertia group generator at $t\mapsto t_i$:
\begin{itemize}
\item[i)] $\gcd(|H|, \ord(\sigma_i))=1$,
\item[ii)] $C_G(\sigma_i) = \langle \sigma_i\rangle$.
\end{itemize}
Then $L/\mathbb{Q}^{\ab}(t)$ embeds into a $\mathbb{Q}^{\ab}$-regular $\tilde{G}$-extension.
\end{prop}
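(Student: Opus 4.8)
The plan is to treat the assertion as a central embedding problem over $\mathbb{Q}^{\ab}(t)$ and to produce a proper, regular solution to it. Writing $F=\Fix(\ker\varphi)$ for the given realization and identifying $\Gal(L/\mathbb{Q}^{\ab}(t))=G$, I would consider the lifted embedding problem $(\varphi^\star:G_{\mathbb{Q}^{\ab}(t)}\to G,\ \varepsilon:\tilde G\to G)$ with central kernel $H$. Since $\ker(\varepsilon)=H$ is central, the obstruction to the mere existence of a (weak) solution is the inflation along $\varphi^\star$ of the extension class $[\tilde G]\in H^2(G,H)$, a class in $H^2(G_{\mathbb{Q}^{\ab}(t)},H)$. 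I would first record that a proper regular solution will follow once this obstruction is shown to vanish and the constant-field and surjectivity issues are separately controlled.

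The second step is to compute this obstruction by a local--global argument. Here I would invoke Sonn's local--global principle for Brauer (central) embedding problems, which reduces the vanishing of the global class to the vanishing of all its localizations at the places of $\mathbb{Q}^{\ab}(t)$, i.e. at the closed points of $\mathbb{P}^1_{\mathbb{Q}^{\ab}}$. At every such place the residue field is a finite extension $\kappa$ of $\mathbb{Q}^{\ab}$; since $G_{\mathbb{Q}^{\ab}}$, and hence each $G_\kappa$, is projective, the ``horizontal'' contribution of the residue field is harmless, and at places that are not branch points the local extension is unramified, so the local obstruction vanishes automatically. Thus the problem concentrates entirely at the finitely many branch points $t_1,\dots,t_r$, where the local embedding problem is governed by the tame decomposition group $D_i$, an extension of a projective group by the cyclic inertia $\langle\sigma_i\rangle$.

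The third step is the local analysis at a branch point $t_i\ne\infty$, which is exactly where hypotheses i) and ii) enter. Because the inertia is procyclic (tame, with $\hat{\mathbb{Z}}$ mapping onto $\langle\sigma_i\rangle$) and $\hat{\mathbb{Z}}$ is free, hence projective, lifting the inertia into the abelian preimage $\varepsilon^{-1}(\langle\sigma_i\rangle)$ is unobstructed in isolation; the only issue is to obtain a lift compatible with the action of $D_i/\langle\sigma_i\rangle$. Under i), the relevant cyclic cohomology $H^2(\langle\sigma_i\rangle,H)\cong H/\ord(\sigma_i)H$ is trivial because $\gcd(|H|,\ord(\sigma_i))=1$, so the preimage splits and a compatible lift exists. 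Under ii), the self-centralizing condition $C_G(\sigma_i)=\langle\sigma_i\rangle$ pins down the action of $D_i$ on $\varepsilon^{-1}(\langle\sigma_i\rangle)$ tightly enough that a compatible lift can again be produced; in either case the local embedding problem at $t_i$ is solvable. I would then treat $t_i=\infty$ separately: by the branch-cycle (product-one) relation the local classes satisfy a reciprocity of the shape $\sum_i\mathrm{inv}_i=0$, so once the local obstruction vanishes at every finite branch point and at all other places, it is forced to vanish at $\infty$ as well; this is precisely why $\infty$ may be exempted from the hypotheses.

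Finally I would assemble the local solutions into a global one and upgrade it to a proper regular solution: projectivity of $G_{\mathbb{Q}^{\ab}}$ kills the arithmetic (constant-field) obstruction, ensuring the solution can be chosen $\mathbb{Q}^{\ab}$-regular, while a standard Frattini/branch-cycle argument --- lifting a generating system of $G$ with product one to a generating system of $\tilde G$, using the slack at $\infty$ to absorb the product into $H$ --- yields surjectivity onto $\tilde G$. The main obstacle, in my view, is not any single cohomology computation but the orchestration of the local--global step: one must simultaneously guarantee vanishing of the obstruction, preservation of regularity over $\mathbb{Q}^{\ab}$, and fullness of the Galois group, and it is the interplay between Sonn's principle, the projectivity of $G_{\mathbb{Q}^{\ab}}$, and the exemption of $\infty$ via the product-one relation that makes these compatible. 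Hypotheses i) and ii) are exactly the two clean sufficient conditions ensuring that the only genuinely local obstruction, at the tame branch points, disappears.
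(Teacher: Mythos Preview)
The paper does not give its own proof of this proposition; it is quoted from Malle--Sonn \cite{MS} and Malle--Matzat \cite{MM}, with the one-sentence summary that it ``is based on a local-global principle for Brauer embedding problems due to Sonn and its application to fields with projective absolute Galois group''. Your outline follows precisely this route: interpret the problem as a central (Brauer) embedding problem, invoke Sonn's local--global principle, use $\mathrm{cd}(\mathbb{Q}^{\ab})\le 1$ to dispose of the constant-field piece and of all non-branch places, analyze the branch points individually, and exempt $\infty$ by reciprocity. At the strategic level you are aligned with the cited argument.

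Two places in your write-up are genuine soft spots, however. First, your handling of hypothesis~ii) is a placeholder rather than an argument: saying that $C_G(\sigma_i)=\langle\sigma_i\rangle$ ``pins down the action of $D_i$ tightly enough'' does not explain why the local invariant at $t_i$ vanishes, and this is the one step in the Malle--Sonn proof that is not formal. You should either carry out the actual residue computation under the self-centralizing hypothesis or cite the precise lemma in \cite[Chapter~IV, around Thm.~7.11]{MM} or \cite[Criterion~2.2]{MS}. Second, your justification for exempting $\infty$ conflates two different reciprocities: the one that matters here is the Brauer-group reciprocity in the Faddeev exact sequence for $\mathrm{Br}(k(t))$ when $\mathrm{cd}(k)\le 1$, not the topological product-one relation among branch cycles. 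Finally, a minor correction on your last step: the surjectivity argument cannot be called a ``Frattini argument'', since $H.G$ is not assumed to be a Frattini cover; what actually yields a proper, $\mathbb{Q}^{\ab}$-regular solution is twisting an initial solution by a suitable regular $H$-extension of $\mathbb{Q}^{\ab}(t)$, available because $\mathbb{Q}^{\ab}$ contains all roots of unity.
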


The following two results are used to obtain regular solutions to central, resp.\ center-free Frattini embedding problems over $\mathbb{Q}$. See \cite{Voe} (Main Theorem) for the first, and \cite{Feit} or \cite[Chapter IV, Prop.\ 5.7]{MM} for the second, respectively.

\begin{prop}[V\"olklein]
\label{prop:voe}
Let $K$ be any field of characteristic $0$, let $L/K(t)$ be a $K$-regular $G$-extension, $\tilde{G} = H.G$ a central extension, and let $C=(C_1,\dots, C_r)$ be the ramification type of $L/K(t)$. Choose an element $\sigma_i\in C_i$ for each $i=1,\dots, r$. Assume that both of the following hold:
\begin{itemize}
\item[i)] $\gcd(|H|, \ord(\sigma_i))=1$ for all $i=1,\dots, r$.
\item[ii)] There exists a subgroup $U$ of $G$ whose preimage in $\tilde{G}$ splits the central subgroup $H$, and such that the fixed field $L^U$ has a place of degree coprime to $|H|$.
\end{itemize}
Then $L/K(t)$ embeds into a $K$-regular $\tilde{G}$-extension.
\end{prop}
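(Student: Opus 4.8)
The plan is to phrase the assertion as the vanishing of the obstruction to a central embedding problem, and then to realize a regular proper solution. Since $H\le Z(\tilde G)$ is abelian, let $[\tilde G]\in H^2(G,H)$ (trivial $G$-action) denote the class of the extension $1\to H\to \tilde G\xrightarrow{\varepsilon}G\to 1$, and let $\varphi\colon G_{K(t)}\to G$ be the epimorphism attached to $L/K(t)$. The embedding problem $(\varphi,\varepsilon)$ admits a solution if and only if the obstruction class $\omega:=\varphi^*[\tilde G]\in H^2(G_{K(t)},H)$ vanishes, so the crux is to prove $\omega=0$. (One may, if convenient, filter $H$ by $\tilde G$-submodules — every subgroup of the central $H$ is normal in $\tilde G$ — and reduce to $H$ cyclic of prime order; but the argument below works verbatim for general finite abelian $H$.)

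First I would show, using hypothesis (i), that $\omega$ is everywhere unramified on $\mathbb P^1_K$. At any place which is not a branch point of $L/K(t)$ the residue of $\omega$ is automatically trivial. At a branch point $t_i$, the inertia is procyclic, generated by $\sigma_i$ of order coprime to $|H|$; since $H$ is central, the preimage $\varepsilon^{-1}(\langle\sigma_i\rangle)$ splits as $H\times\langle\tilde\sigma_i\rangle$ by coprimality (a central extension of a cyclic group of coprime order splits), so the local embedding problem over $t_i$ is split and the residue $\partial_{t_i}(\omega)$ vanishes — this is exactly where (i) enters, and it applies at $t_i=\infty$ as well since (i) is assumed for all branch points. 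By the Faddeev residue exact sequence for $H^2(K(t),H)$, a class with vanishing residues at all closed points of $\mathbb P^1_K$ is inflated from the base field: $\omega=\operatorname{inf}(\omega_0)$ for a unique constant class $\omega_0\in H^2(K,H)$.

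Next I would kill $\omega_0$ using hypothesis (ii) via a restriction–corestriction (transfer) argument. Let $U\le G$ be the given subgroup; restriction of $[\tilde G]$ to $U$ is the class of $\varepsilon^{-1}(U)\to U$, which is trivial because $\varepsilon^{-1}(U)$ splits $H$. Hence the restriction of $\omega$ to $G_{L^U}=\varphi^{-1}(U)$ vanishes; equivalently, the image of $\omega_0$ under $H^2(K,H)\to H^2(L^U,H)$ is $0$. Now take the place of $L^U$ of degree $d$ coprime to $|H|$ afforded by (ii); its residue field $K'$ satisfies $[K':K]=d$, and the associated point $\operatorname{Spec}K'\to X_U$ of the (geometrically integral) model $X_U$ of $L^U$ factors the structure map $X_U\to\operatorname{Spec}K$. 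Pulling back the constant class along this point therefore computes $\operatorname{res}^{K'}_K(\omega_0)$ as the further pullback of $\omega_0|_{L^U}=0$, so $\operatorname{res}^{K'}_K(\omega_0)=0$. Applying corestriction gives $d\cdot\omega_0=\operatorname{cores}^{K'}_K\operatorname{res}^{K'}_K(\omega_0)=0$; since $\omega_0$ is annihilated by $\exp(H)\mid|H|$ and $\gcd(d,|H|)=1$, we conclude $\omega_0=0$, whence $\omega=0$ and the embedding problem is solvable.

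Finally I would upgrade an arbitrary solution $\psi\colon G_{K(t)}\to\tilde G$ to a regular proper one: the solution set is a torsor under $\operatorname{Hom}(G_{K(t)},H)$ (as $H$ is central with trivial action), and twisting $\psi$ by a suitable character removes any constant-field extension (making the solution field $K$-regular over $K(t)$) and forces surjectivity onto $\tilde G$, yielding a $K$-regular $\tilde G$-extension containing $L$. The main obstacle is the \emph{global} part of the argument: the local computation giving unramifiedness from (i) is clean, but passing from unramifiedness to constancy requires the Faddeev sequence, and killing the constant class $\omega_0$ genuinely depends on the transfer argument fed by the coprime-degree place of (ii) — the match between this coprimality and the $|H|$-torsion of $\omega_0$ is the heart of the proof, while the final regularity and surjectivity upgrade is standard but must be carried out so as to preserve the embedding of $L$.
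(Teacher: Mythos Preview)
The paper does not give its own proof of this proposition; it is quoted as the Main Theorem of V\"olklein's paper \cite{Voe} and used as a black box. Your cohomological approach --- reduce to the vanishing of $\omega=\varphi^*[\tilde G]\in H^2(K(t),H)$, use (i) to show $\omega$ is everywhere unramified and hence constant, then use (ii) together with restriction--corestriction to kill the constant part --- is in fact essentially V\"olklein's own argument, so your outline matches the intended proof rather than differing from it.

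Two places deserve tightening. In step~3 you write that ``pulling back the constant class along this point computes $\operatorname{res}^{K'}_K(\omega_0)$ as the further pullback of $\omega_0|_{L^U}=0$''. There is no map $G_{K'}\to G_{L^U}$, so there is no direct pullback $H^2(L^U,H)\to H^2(K',H)$ in Galois cohomology. The honest route is to complete at the given place: $\widehat{(L^U)}_v\cong K'((s))$ in characteristic~$0$, the constant class $\omega_0$ dies in $H^2(K'((s)),H)$ because it already dies in $H^2(L^U,H)$, and $H^2(K',H)\hookrightarrow H^2(K'((s)),H)$ is a split injection; hence $\operatorname{res}^{K'}_K(\omega_0)=0$ and your corestriction step goes through. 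In step~4, ``twisting by a suitable character'' to obtain simultaneously regularity and surjectivity is correct but not one-line: if the initial solution happens to satisfy $M=L\cdot K_1$ for a constant extension $K_1/K$, a twist by a character factoring through $G_K$ will kill surjectivity, so one must twist by a \emph{geometric} $H$-character of $K(t)$ (these exist in characteristic~$0$, e.g.\ from regular cyclic covers of $\mathbb{P}^1_K$). With these two points made precise, your proof is complete and agrees with V\"olklein's.
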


\begin{prop}[Feit]
\label{prop:feit}
Let $L/K(t)$ be a $K$-regular $G$-extension, and let $\tilde{G} = H.G$ be a Frattini extension of $G$ with $Z(G) = Z(\tilde{G}) = 1$.
Let $C=(C_1,\dots, C_r)$ be the ramification type of $L/K(t)$, and assume that $C$ consists of rational conjugacy classes\footnote{Rationality here means that an element $x$ of this class is conjugate to all powers $x^d$ with $d$ coprime to the order of $x$. In particular rationality for the lifted classes $\tilde{C}_i$ in the following is automatic if, e.g., they can be chosen of the same element order as $C_i$.} $C_i$ of $G$ ($i=1,\dots, r$).
Assume that there exist rational classes $\tilde{C}_i\subset \tilde{G}$ lifting the classes $C_i$ ($i=1,\dots, r$) and fulfilling each of the following.
\begin{itemize}
\item[i)] $\tilde{C_i}= \tilde{C_j} \Leftrightarrow C_i=C_j$ (for all $1\le i,j\le r$).
\item[ii)] Set $f_i = \frac{|\tilde{C_i}|}{|C_i|}$. Then $f_1=\dots = f_{r-2} = 1$, and $f_{r-1} = |H|$.
\end{itemize}
Then $L/K(t)$ embeds into a $K$-regular $\tilde{G}$-extension of $K(t)$.
\end{prop}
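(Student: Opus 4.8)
The plan is to recast the embedding assertion in terms of branch cycle descriptions and then to split the argument into a purely group-theoretic \emph{existence} step and an arithmetic \emph{descent} step. Over $\overline{K}$, the $G$-extension $L/K(t)$ is determined by a generating tuple $(\sigma_1,\dots,\sigma_r)$ with $\sigma_i\in C_i$, $\sigma_1\cdots\sigma_r=1$ and $\langle\sigma_1,\dots,\sigma_r\rangle=G$, the existence of which is exactly the statement that $C$ is the ramification type. Embedding $L/K(t)$ into a $K$-regular $\tilde{G}$-extension with the prescribed inertia then amounts to producing a tuple $(\tilde\sigma_1,\dots,\tilde\sigma_r)$ with $\tilde\sigma_i\in\tilde{C}_i$, $\tilde\sigma_1\cdots\tilde\sigma_r=1$, $\langle\tilde\sigma_1,\dots,\tilde\sigma_r\rangle=\tilde{G}$, reducing to $(\sigma_1,\dots,\sigma_r)$ modulo $H$, and then showing that the associated $\tilde{G}$-cover descends to $K$ and dominates $L$.

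First I would carry out the existence step, for which the hypotheses on the $f_i$ are tailored. Since $f_i=|\tilde{C}_i|/|C_i|$ equals the number of preimages of a fixed $\sigma_i$ lying in $\tilde{C}_i$, the equalities $f_1=\dots=f_{r-2}=1$ let me lift $\sigma_1,\dots,\sigma_{r-2}$ to \emph{unique} elements $\tilde\sigma_1,\dots,\tilde\sigma_{r-2}$. Choosing any lift $\tilde\sigma_r\in\tilde{C}_r$ of $\sigma_r$ and letting $\tilde\sigma_{r-1}:=(\tilde\sigma_1\cdots\tilde\sigma_{r-2})^{-1}\tilde\sigma_r^{-1}$ be forced by the product-one relation, I obtain an element mapping to $\sigma_{r-1}$; because $f_{r-1}=|H|$ means \emph{all} $|H|$ preimages of $\sigma_{r-1}$ already lie in $\tilde{C}_{r-1}$, this $\tilde\sigma_{r-1}$ lands in the correct class automatically. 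Thus a product-one tuple in the prescribed classes exists, and since $\tilde{G}=H.G$ is a Frattini extension, any lift of a generating set of $G$ generates $\tilde{G}$; hence the tuple generates $\tilde{G}$ and gives a proper geometric solution. The same bookkeeping shows each $G$-tuple admits exactly $f_r$ lifts, so reduction modulo $H$ is a surjection with constant fibres, a fact I would exploit to analyze braid orbits below.

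Next I would descend to $K$ via the theory of Nielsen classes and Hurwitz spaces. The braid action on Nielsen tuples commutes with reduction modulo $H$, and the braid orbit attached to $L/K(t)$ is already defined over $K$. Rationality of the classes ensures that the $G_K$-action, which by Fried's branch cycle argument acts through the cyclotomic character by raising classes to prime-to-order powers and by permuting the branch points, stabilizes the set of $\tilde{C}$-tuples; condition i) guarantees that the pattern of coincidences among the $\tilde{C}_i$ mirrors that among the $C_i$, so that this action is compatible with the permutation of the branch points of $L$. Finally, the assumption $Z(\tilde{G})=1$ makes the inner Hurwitz space a fine moduli space, so that field of moduli equals field of definition; a $G_K$-fixed lift then yields an honest $\tilde{G}$-cover over $K$ whose quotient by $H$ recovers $L/K(t)$.

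The hard part will be this descent, and specifically the production of a $G_K$-fixed (hence $K$-rational) lift in the finite fibre over the point corresponding to $L$: existence of a lift over $\overline{K}$ does not by itself furnish one over $K$. The constraint $f_{r-1}=|H|$, together with the free choice at the last class and condition i), is precisely what forces the $G_K$- and braid-actions on this fibre to be compatible and to fix a Nielsen class, while the centerless and Frattini hypotheses respectively eliminate the central-twist obstruction (via field of moduli equal to field of definition) and the possible failure of surjectivity. An alternative to this Hurwitz-space route would be Feit's original argument, which bounds the relevant embedding obstruction and exploits that a Frattini kernel forces it to vanish.
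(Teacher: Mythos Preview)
The paper does not supply its own proof of this proposition; it is quoted from Feit and from Malle--Matzat (Chapter IV, Prop.\ 5.7) as a black-box tool. Your framework---branch-cycle descriptions, Nielsen classes, inner Hurwitz spaces, and field of moduli equal to field of definition via $Z(\tilde{G})=1$---is exactly the one used in those references, and your existence step is carried out correctly.

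What you are missing is the observation that collapses your ``hard part'' to a triviality. Your own count shows that reduction $Ni(\tilde{C})\to Ni(C)$ is surjective with constant fibres of size $f_r$. Passing to inner Nielsen classes (quotienting by $\tilde{G}$, resp.\ $G$, both acting freely since both groups are centerless and the tuples generate), the induced map $Ni(\tilde{C})^{\mathrm{in}}\to Ni(C)^{\mathrm{in}}$ is still surjective, and comparing cardinalities gives
\[
|Ni(\tilde{C})^{\mathrm{in}}| \;=\; \frac{f_r}{|H|}\,|Ni(C)^{\mathrm{in}}|.
\]
Surjectivity forces $f_r\ge |H|$, while $f_r\le |H|$ is automatic since $\sigma_r$ has only $|H|$ preimages in $\tilde{G}$. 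Hence $f_r=|H|$ and the map on inner Nielsen classes is a \emph{bijection}. This is the real punchline of hypothesis~ii), and it is why no constraint on $f_r$ is stated.

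With this bijection, descent is immediate rather than hard. Rationality of all classes together with condition~i) makes both inner Hurwitz spaces and the reduction morphism between them defined over $K$; the bijection on fibres upgrades this morphism to an isomorphism. The $K$-point corresponding to $L/K(t)$ therefore has a \emph{unique} preimage, automatically a $K$-point, and $Z(\tilde{G})=1$ turns it into an actual $K$-regular $\tilde{G}$-cover dominating $L$. There is no fixed-point problem in a nontrivial fibre and no separate obstruction to bound; your closing sentence about ``Feit's original argument bounding an embedding obstruction'' mischaracterizes the proof, which is precisely this Nielsen-class count.
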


Finally, the following is especially adapted to central embedding problems with kernel of order $2$. It is contained (in a language of Brauer classes) in \cite{Mestre1}, and applied to obtain regular Galois realizations with group $\SL_2(7)$ and $2.M_{12}$ in \cite{Mestre2} (see also \cite{Klueners} for an application to the group $\SL_2(11)$).

\begin{prop}[Mestre]
\label{prop:hwi}
Let $K$ be a field of characteristic $0$, let $L/K(t)$ be a $K$-regular $G$-extension with at most five branch points, and let $(\varphi: G_{K(t)}\to G, \varepsilon: \tilde{G} = 2.G\to G)$ be a central embedding problem with kernel of order $2$. Assume that there exists at least one non-branch point $t_0\in K$ such that the embedding problem given by $\varphi_0: G_K\to \Gal(L_{t_0}/K)$ and (restriction of) $\varepsilon$ is solvable. Then there exists an extension $K(u)/K(t)$ of rational function fields of degree $\le 16$ such that the embedding problem induced by $\tilde{\varphi}: G_{K(u)}\to \Gal(L(u)/K(u))$ and (restriction of) $\varepsilon$ is solvable. In particular, if $L(u)/K(u)$ is still $K$-regular with Galois group $G$, then it embeds into a $\tilde{G}$-extension.
\end{prop}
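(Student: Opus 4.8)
The plan is to recast the embedding problem cohomologically and to solve it in the Brauer group of $K(t)$, following the circle of ideas behind Mestre's change-of-variable technique. The central extension $\tilde G = 2.G$ is classified by a class $\epsilon\in H^2(G,\zz/2\zz)$, and, since $\ker(\varepsilon)$ is central of order $2$, the embedding problem $(\varphi,\varepsilon)$ over $K(t)$ is solvable precisely when the pullback $\alpha:=\varphi^\ast\epsilon$ vanishes in $H^2(G_{K(t)},\zz/2\zz)$. As $\operatorname{char}K=0$ we identify $\zz/2\zz\cong\mu_2$ and regard $\alpha$ as a $2$-torsion class in $\operatorname{Br}(K(t))$, i.e. the class of a sum of quaternion algebras over $K(t)$. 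All of the analysis below is carried out for this single Brauer class $\alpha$; the group theory enters only through the fact that $\alpha$ records the local discrepancy between $G$ and $\tilde G$ at each branch point.

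First I would pin down the ramification of $\alpha$. Because $L/K(t)$ is unramified away from the branch points $t_1,\dots,t_r$ (with $r\le 5$), the class $\alpha$ is unramified at every place of $K(t)$ outside $\{t_1,\dots,t_r,\infty\}$. I would make this precise through Faddeev's residue sequence, which exhibits $\operatorname{Br}(K(t))$ as an extension of a sum of residue groups $\bigoplus_P H^1(\kappa(P),\qq/\zz)$ by $\operatorname{Br}(K)$; thus $\alpha$ is determined by its residues at the (at most six) points $t_1,\dots,t_r,\infty$ together with a constant part in $\operatorname{Br}(K)$. Reciprocity forces the number of points carrying a nontrivial residue to be even. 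The hypothesis at the non-branch point $t_0\in K$ says exactly that the specialization $\alpha(t_0)\in\operatorname{Br}(K)$ --- the obstruction of the fibre embedding problem $(\varphi_0,\varepsilon)$ --- is trivial.

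The heart of the argument is to produce a rational change of variable $t=R(u)$, with $K(u)$ again rational and $[K(u):K(t)]\le 16$, whose pullback kills $\alpha$. The mechanism is twofold. (i) Choose $R$ so that over each of the finitely many points carrying a nontrivial residue the ramification indices are even; by functoriality of residues, $\partial_Q(R^\ast\alpha)=e_{Q}\cdot R^\ast\partial_P(\alpha)=0$, so $R^\ast\alpha$ becomes everywhere unramified and therefore equals a constant class $\beta\in\operatorname{Br}(K)\subseteq\operatorname{Br}(K(u))$. Concretely this can be arranged by an explicit composition of a few quadratic base changes (each of the shape $t-a\mapsto s^2$, which turns a symbol $(t-a,c)$ into $2\cdot(s,c)=0$), or, more geometrically, as a genus-$0$ rational multisection of the conic bundle attached to $\alpha$; the constraint that $K(u)$ stay rational and be defined over $K$ is what caps the degree at $16$ for $r\le 5$. (ii) Arrange that some $K$-rational point $u_0$ maps to $t_0$. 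Then, evaluating the constant (hence specialization-independent) class at $u_0$ gives $\beta=(R^\ast\alpha)(u_0)=\alpha(R(u_0))=\alpha(t_0)=0$. Hence $R^\ast\alpha=0$, i.e. the embedding problem over $K(u)$ is solvable; and if $L(u)/K(u)$ is still $K$-regular with group $G$, a proper solution is precisely a $K$-regular $\tilde G$-extension into which it embeds.

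The step I expect to be the main obstacle is (i): constructing the change of variable explicitly with the stated degree bound while keeping $K(u)$ a rational function field over $K$. Over $\overline K$ this is a tractable combinatorial cover-existence problem, but over $K$ one must respect the $\Gal(\overline K/K)$-action on the set of ramified points (which need not be individually $K$-rational) and simultaneously keep the source of genus $0$; a genus-$0$ cover cannot have even ramification over more than two points without auxiliary ramification elsewhere, so the even-order conditions must be distributed carefully. Controlling this distribution so as not to exceed degree $16$, and reconciling it with the requirement $R(u_0)=t_0$, is exactly the content of Mestre's explicit construction and is where the real work lies.
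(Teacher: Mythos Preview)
The paper does not supply its own proof of this proposition; it is stated with attribution and a reference to Mestre's papers \cite{Mestre1}, \cite{Mestre2}, where the result is proved in the language of Brauer classes. Your outline follows exactly that route: translate the obstruction to a $2$-torsion class $\alpha\in\operatorname{Br}(K(t))$, use Faddeev's sequence to localize its ramification at the (at most five) branch points, kill the residues by a rational base change $t=R(u)$, and then use the solvable fibre at $t_0$ to annihilate the residual constant class in $\operatorname{Br}(K)$. This is precisely Mestre's strategy, so there is no divergence in approach to report.

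That said, your write-up is a roadmap rather than a proof. As you explicitly acknowledge, the entire content of the result lies in step~(i): producing, over $K$ and not merely over $\overline K$, a degree~$\le 16$ map $R$ with rational source, even ramification over the support of the residues (respecting the Galois action on that support), and a $K$-rational point over $t_0$. You sketch two mechanisms (iterated quadratic substitutions, or a genus-$0$ multisection of the associated conic bundle) but do not carry either out or justify the degree bound; you end by saying this ``is exactly the content of Mestre's explicit construction and is where the real work lies.'' Since the paper likewise defers that work to Mestre, your proposal matches the paper's treatment, but it would not stand as an independent proof without either reproducing Mestre's construction or giving an alternative one meeting the degree bound.
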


\begin{remark}
\label{rem:hwi}
The condition on $L(u)/K(u)$ remaining regular with group $G$ is automatically obtained in many situations, for example if $G$ has no proper subgroup of index $\le 16$, since then $L/K(t)$ and $K(u)/K(t)$ must be linearly disjoint even after base change to $\overline{K}$. Also, for the case of at most {\it four} branch points, Theorem 2 of \cite{Mestre0} mentions explicitly that $\overline{K}(u)/\overline{K}(t)$ may be chosen linearly disjoint from $L\overline{K}/\overline{K}(t)$, via making the branch point sets of the two extensions disjoint from each other. The analogous conclusion might be deduceable for the case of five branch points as well, although the preceding two observations will be sufficient for our purposes.
\end{remark}

\section{Regular Galois realizations with group $C.M_{22}.A$}

\label{sec:reg}
In \cite{MS} all covering groups of sporadic simple groups (and their automorphism groups), except the even degree covering groups of $M_{22}$ and $\Aut(M_{22})$ were realized regularly as Galois groups over $\mathbb{Q}^{\ab}(t)$. In fact, $3.M_{22}$ has been realized regularly over $\mathbb{Q}(t)$ by a criterion due to Feit (\cite{Feit}). 
\subsection{A three point $\Aut(M_{22})$-realization and its rational translates}
We begin by pointing out that all covering groups of $M_{22}$ and $\Aut(M_{22})$ can be realized regularly over $\mathbb{Q}^{\ab}$ using an $\Aut(M_{22})$-realization with three branch points corresponding to a rigid class triple. This realization may have been previously overlooked (cf.\ \cite{MS} or \cite[Chapter IV, Thm.\ 7.14]{MM}), even though it has been noticed in a different context (see Example 8.1 in Chapter III of \cite{MM}). 
However, the same realization yields much more, and we will indeed use it to realize the full covering groups $12.M_{22}$ and $2.\Aut(M_{22})$ regularly over $\mathbb{Q}$ in Section \ref{sec:mainres}. We give an explicit polynomial corresponding to the underlying $\Aut(M_{22})$-realization. 
Since this polynomial corresponds to a genus-$0$ triple in $\Aut(M_{22})$ (in the standard action on $22$ points) with three branch points, its computation is nowadays relatively standard.

\begin{thm}
\label{thm:3point}
Let $f(t,X) =(X^2-7X+15)^5 (X^2+15X+180)^5 (X^2+4X+400)- tX^6(X-4)^4$. Let $L$ be the splitting field of $f$ over $\mathbb{Q}(t)$.
Then \begin{itemize}
\item[a)] $L/\mathbb{Q}(t)$ has regular Galois group $\Aut(M_{22})$, and the ramification type with respect to $t$ is of the form $(2B,5A,12A)$ (in ATLAS notation).\footnote{Here, $2B$ is the class of elements of cycle type $2^7.1^8$ in $\Aut(M_{22})$. The other classes are uniquely identified by their element order.}
\item[b)] Setting $t=t(s) = \frac{3^3\cdot 5^4\cdot 11^{10}(s^2+55)}{2^8}$, the polynomial $f(t(s),X)$ has regular Galois group $M_{22}$ over $\mathbb{Q}(s)$, and the ramification type with respect to $s$ is of the form $(5A,5A,6A)$.
\item[c)] $L\mathbb{Q}^{\ab}/\mathbb{Q}^{\ab}(s)$ (resp., $L\mathbb{Q}^{\ab}/\mathbb{Q}^{\ab}(t)$) can be embedded into a $\mathbb{Q}^{\ab}$-regular Galois extension with group $12.M_{22}$ (resp., $2.\Aut(M_{22})$).
\end{itemize}
\end{thm}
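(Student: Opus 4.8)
**The plan is to prove the three parts of Theorem \ref{thm:3point} in sequence, treating (a) as the computational backbone, (b) as a pullback construction, and (c) as an application of the Malle–Sonn criterion (Proposition \ref{prop:ms}).**

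For part (a), I would verify the claimed polynomial directly. The polynomial $f(t,X)$ has degree $22$ in $X$, matching the standard degree-$22$ action of $\Aut(M_{22})$. The key is to read off the ramification type from the factorization patterns. At $t=\infty$ the fiber is $X^6(X-4)^4 \cdot (\text{unit})$ together with the leading behavior, giving a cycle type corresponding to the class $12A$; at $t=0$ the fiber factors as $(X^2-7X+15)^5(X^2+15X+180)^5(X^2+4X+400)$, yielding cycle type $5^2\cdot 5^2 \cdot \dots$ matching $5A$; and the third branch point (where the discriminant of $f$ in $X$ vanishes) should produce the involution class $2B$ of cycle type $2^7.1^8$. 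The genus-$0$ condition forces exactly three branch points, and the cycle types $(2,5,12)$ satisfy the Riemann–Hurwitz count $\sum(\text{index}) = 2(n-1)$ for $n=22$. To confirm the Galois group is exactly $\Aut(M_{22})$ and not a smaller or larger transitive group of degree $22$, I would compute (e.g.\ in Magma) the Galois group of $f$ over $\mathbb{Q}(t)$, or invoke rigidity: the class triple $(2B,5A,12A)$ is rationally rigid in $\Aut(M_{22})$, and rationality of all three classes guarantees descent of the field of definition to $\mathbb{Q}$, yielding a regular $\Aut(M_{22})$-extension.

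For part (b), the substitution $t = t(s) = \frac{3^3\cdot 5^4\cdot 11^{10}(s^2+55)}{2^8}$ is a degree-$2$ rational pullback. The idea is that this parametrization is designed so that the specialized polynomial $f(t(s),X)$ has splitting field with Galois group exactly the subgroup $M_{22}$ of index $2$. I would check that $t(s)$ realizes a cover whose branch points over $s$ lie above the original branch points $\{0,\infty,t_{\text{third}}\}$ in such a way that the resulting ramification type becomes $(5A,5A,6A)$: the quadratic in $s$ ramifies over the involution branch point (absorbing the $2B$ class), while the preimages of the order-$5$ and order-$12$ branch points recombine to give two copies of $5A$ and one $6A$ (note $6A$ is the image of $12A$ under squaring, consistent with the degree-$2$ pullback halving the inertia at the ramified point). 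That the Galois group drops to $M_{22}$ should follow because the pullback field corresponds precisely to the quadratic subextension cut out by $\Aut(M_{22})/M_{22}$; I would verify that $s^2+55$ together with the specific constant makes this quadratic the correct one (i.e.\ the fixed field of $M_{22}$), again confirmable by a direct Galois computation of $f(t(s),X)$ over $\mathbb{Q}(s)$.

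For part (c), I would apply Proposition \ref{prop:ms} to each of the two base fields. For $L\mathbb{Q}^{\ab}/\mathbb{Q}^{\ab}(s)$ with group $M_{22}$ and ramification type $(5A,5A,6A)$, and target central extension $12.M_{22} = C_{12}.M_{22}$, I must check that at each finite branch point one of conditions (i) or (ii) holds. For the two $5A$ branch points, $\gcd(12,5)=1$, so condition (i) holds immediately. For the $6A$ branch point, $\gcd(12,6)=6\ne 1$, so I would instead verify condition (ii): that the centralizer $C_{M_{22}}(\sigma)$ of a $6A$-element is cyclic of order $6$, i.e.\ $C_{M_{22}}(\sigma)=\langle\sigma\rangle$ — this is a finite check in the character table of $M_{22}$. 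If the $6A$ branch point can be arranged at $s=\infty$ (which the criterion exempts), the check is even unnecessary; otherwise the self-centralizing property must hold. Analogously, for $L\mathbb{Q}^{\ab}/\mathbb{Q}^{\ab}(t)$ with group $\Aut(M_{22})$, ramification type $(2B,5A,12A)$, and target $2.\Aut(M_{22})$, condition (i) handles the $5A$ branch point ($\gcd(2,5)=1$); for $2B$ and $12A$ I would check that one of (i) or (ii) holds (with $|H|=2$, condition (i) fails only at even-order classes, so $2B$ and $12A$ require the self-centralizing condition (ii), verifiable in the character table, unless one such point is placed at $t=\infty$).

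\textbf{The main obstacle I anticipate is part (c), specifically verifying condition (ii) at the even-order branch points.} The Malle–Sonn criterion exempts only the single point $t_i=\infty$ from the hypothesis, so for the remaining even-order classes ($6A$ in the $M_{22}$ case; $2B$ and $12A$ in the $\Aut(M_{22})$ case) one genuinely needs the centralizer to be self-centralizing (cyclic, generated by the inertia element). For $12A$ in $\Aut(M_{22})$ this is plausible since the class has large element order and $\Aut(M_{22})$ is not too large, but for the involution class $2B$ (cycle type $2^7.1^8$) the centralizer is typically \emph{not} self-centralizing — involutions in sporadic groups have large centralizers. This means I would want to arrange the branch point assignment so that the problematic involution sits at $t=\infty$, exploiting the exemption in Proposition \ref{prop:ms}; the polynomial in part (a) places $12A$ at $\infty$, so for the $\Aut(M_{22})$ case I may instead need to apply a Möbius change of coordinate on $t$ sending the $2B$-point to $\infty$, and then recheck that $12A$ and $5A$ satisfy (i) or (ii) at their new finite locations. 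Managing this placement — ensuring exactly the one genuinely obstructive class lands at infinity while the remaining classes satisfy the gcd or centralizer condition — is the delicate bookkeeping step on which the whole argument hinges.
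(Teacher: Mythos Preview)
Your overall strategy matches the paper's: read off the ramification from the factorization of $f$, identify the index-$2$ quadratic subfield to get (b), and apply Proposition \ref{prop:ms} for (c) with the problematic even-order class placed at infinity. In particular, your analysis of the obstacle in (c) is exactly right: the paper moves the $2B$ branch point to $t=\infty$ by a M\"obius change and then uses only that elements of order $12$ are self-centralizing in $\Aut(M_{22})$; for the $M_{22}$ case the $6A$ point already sits at $s=\infty$ under the given $t(s)$, so only the coprimality condition at the two $5A$ points is needed.

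The one place where the paper proceeds differently is the verification in (a) that the Galois group is $\Aut(M_{22})$ rather than $S_{22}$. You propose either a black-box Magma computation or an appeal to rigidity of $(2B,5A,12A)$. The paper deliberately avoids black-box Galois group routines and instead gives an explicit argument due to Elkies: after establishing double transitivity (by factoring $f_1(X)f_2(Y)-f_2(X)f_1(Y)$) and reducing to the dichotomy $\Aut(M_{22})$ versus $S_{22}$, one assumes $G=S_{22}$, computes the genus of the curve $C_4$ corresponding to the $4$-set stabilizer, and obtains a contradiction by counting $\mathbb{F}_\lambda$-points against the Hasse--Weil bound for a well-chosen large prime $\lambda$. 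Your rigidity suggestion would also work in principle, but note that rigidity by itself produces \emph{some} regular realization rather than certifying that \emph{this} polynomial realizes it; you would still need the transitivity and cycle-structure checks to pin down the group. A small slip in your (b): the quadratic $\mathbb{Q}(s)/\mathbb{Q}(t)$ must ramify at \emph{two} points, namely both the $2B$ and the $12A$ branch points (these are the odd-permutation classes, as one sees from $M_{22}=\Aut(M_{22})\cap A_{22}$ and the discriminant), which is precisely what makes the $12A$ inertia drop to $6A$ while the $5A$ point splits into two unramified copies.
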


\begin{proof}
Factoring first the discriminant of $f$ with respect to $x$, and then the polynomials $f(t_0,X)$ for the roots $t_0$ of the discriminant, one obtains that $L/\mathbb{Q}(t)$ is ramified exactly at three points with inertia group generators of cycle structure $(12.6.4)$, $(5^4.1^2)$, and $(2^7.1^8)$ respectively. Let $G=\Gal(f/\mathbb{Q}(t))$. It is easy to verify that $G$ is a doubly transitive subgroup of $S_{22}$; indeed, it suffices to factor the polynomial $f_1(X)f_2(Y)-f_2(X)f_1(Y)$, where $f_1$ and $f_2$ are the coefficients of $f$ at $t^0$ and $t^1$ respectively. The degrees of the irreducible factors of this polynomial correspond to the orbit lengths of a point stabilizer in $G$, and Magma confirms that these degrees are $1$ and $21$. Since there are no doubly transitive subgroups of $S_{22}$ other than $S_{22}$, $A_{22}$, $\Aut(M_{22})$ and $M_{22}$ (see, e.g., \cite{BL}),
 the above information on cycle structures in $G$ leaves only the possibilities $G=\Aut(M_{22})$ and $G=S_{22}$. Excluding the latter is not quite as easy, due to the high transitivity degree of $\Aut(M_{22})$.  
To obtain a strict proof without relying on any black-box methods, we follow a nice idea by Elkies (\cite{Elkies}). Namely, assume that $G=S_{22}$. Choose any prime $\lambda$ of good reduction for $f(t,X)$, and consider the degree-22 cover of projective curves $C\to \mathbb{F}_{\lambda}\mathbb{P}^1$ given by (the mod-$\lambda$ reduction of) $f(t,X)=0$. Good reduction implies that this cover is still regular with full Galois group $G=S_{22}$, and we may consider the subcover $C_4\to \mathbb{F}_{\lambda}\mathbb{P}^1$ of its Galois closure corresponding to the stabilizer of a $4$-set in $S_{22}$. The value $4$ is chosen due to the fact that, unlike $S_{22}$, the group $\Aut(M_{22})$ is not transitive on $4$-sets, which will eventually result in a contradiction. The curve $C_4$ is an absolutely irreducible $\mathbb{F}_\lambda$-curve, and its genus may be computed easily via the Riemann-Hurwitz genus formula, namely by computing the cycle structures of the three inertia group generators (i.e., elements of cycle structure $(12.6.4)$, $(5^4.1^2)$, and $(2^7.1^8)$) in the action on cosets of a $4$-set stabilizer in $S_{22}$ (here, the assumption $G=S_{22}$ is being used). Using Magma, this genus turns out to be $g=712$, and the Hasse-Weil bound yields that the number of $\mathbb{F}_\lambda$-points on $C_4$ is at most $\lambda + 1 + 2g\sqrt{\lambda}$. But on the other hand, points on $C_4$ can be found explicitly by exhaustive search: for any value $t_0\in \mathbb{F}_\lambda$ not dividing the discriminant of $f$, any degree-$4$ factor of $f(t_0,X)$ over $\mathbb{F}_\lambda$ corresponds to an $\mathbb{F}_\lambda$-point on $C_4$. Choosing $\lambda = 2160553$ (the $160000$-th prime), we find $4289839$ points in this way using Magma, whereas Hasse-Weil gives an upper bound of $4253666$. This contradiction shows that $G=\Aut(M_{22})$.

Next, to derive b), note that the fixed field $L^{M_{22}}$ in $L$ is quadratic over $\mathbb{Q}(t)$ and ramified at exactly two rational places. Hence it is a rational function field $\mathbb{Q}(s)$. Since $M_{22} = \Aut(M_{22})\cap A_{22}$, the concrete parameterization is obtained immediately via calculation of the discriminant.

Regarding c), the claim 
is a direct consequence of Proposition \ref{prop:ms}, upon noting that elements of order $12$ are self-centralizing in $\Aut(M_{22})$. Here we have used Proposition \ref{prop:ms} with the branch point of class $2B$ shifted to $t\mapsto \infty$, so that the finite branch points are of classes $5A$ and $12A$. This is of course without loss of generality up to a fractional linear transformation in the parameter $t$.
\end{proof}

\subsection{Specializations with prescribed behavior}
We now prepare the proof of Theorem \ref{thm:main} by providing specializations (in fact, infinitely many) of the polynomial in Theorem \ref{thm:3point} with Galois group $M_{22}$, for which the central embedding problem with kernel $C_4$ is solvable.

To do this, it is important to remember the following local-global principle for central embedding problems (cf.\ \cite[Chapter IV, Cor.\ 10.2]{MM}), which is essentially a consequence of the local-global principle for Brauer embedding problems over number fields (\cite[Chapter IV, Cor.\ 7.8]{MM}).

\begin{lemma}
\label{lem:prime_kernel}
Let $\Gamma = C.G$ be a central extension of $G$ by a cyclic group $C$ of prime order and $\varepsilon:\Gamma\to G$ the canonical projection.
Let $\varphi:G_{\mathbb{Q}}\to G$ be a continuous epimorphism. Then the embedding problem 
 $(\varphi, \varepsilon)$ is solvable if and only if all associated local embedding problems $(\varphi_p, \varepsilon_p)$ are solvable, where $p$ runs through all primes of $\mathbb{Q}$ (including the infinite one).
\end{lemma}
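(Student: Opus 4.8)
The plan is to translate solvability into the vanishing of a single cohomological obstruction class and then invoke the Hasse principle for that class. Write $\ell$ for the prime with $C\cong \zz/\ell\zz$; since the extension is central, $C$ carries the trivial $G$-action, and the extension $1\to C\to \Gamma \to G\to 1$ is classified by a class $\eta\in H^2(G,C)$. Pulling back along $\varphi$ yields an obstruction class $\omega := \varphi^\ast(\eta)\in H^2(G_\qq, C)$, and the standard theory of central (Brauer) embedding problems (cf.\ \cite[Chapter IV]{MM}) asserts that $(\varphi,\varepsilon)$ is solvable precisely when $\omega=0$.

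First I would set up the local-to-global comparison of obstruction classes. For each prime $p$, the local embedding problem $(\varphi_p,\varepsilon_p)$ is, by construction, the embedding problem attached to the epimorphism $\varphi_p\colon G_{\qq_p}\to G(p)$ and to the central extension $\varepsilon^{-1}(G(p))\to G(p)$, whose kernel is again $C$. Because $C$ lies in the center of $\Gamma$, the class of this restricted extension in $H^2(G(p),C)$ is the image of $\eta$ under restriction $H^2(G,C)\to H^2(G(p),C)$. Consequently the local obstruction is $\omega_p=\varphi_p^\ast(\eta|_{G(p)})=\operatorname{res}_p(\omega)$, i.e.\ the localization of the global class $\omega$; thus $(\varphi_p,\varepsilon_p)$ is solvable if and only if $\operatorname{res}_p(\omega)=0$ in $H^2(G_{\qq_p},C)$.

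Having reduced both sides to a statement about the single class $\omega$ and its localizations, I would finish by applying the Hasse principle. Since $C$ is cyclic of prime order, $H^2(G_\qq,C)$ may be identified (after a twist by a primitive $\ell$-th root of unity when $\ell$ is odd) with the $\ell$-torsion of the Brauer group of the appropriate field, and the Albert--Brauer--Hasse--Noether theorem, in the form of the local-global principle for Brauer embedding problems \cite[Chapter IV, Cor.\ 7.8]{MM}, asserts that the localization map $H^2(G_\qq,C)\to \bigoplus_p H^2(G_{\qq_p},C)$ is injective. Therefore $\omega=0$ if and only if $\operatorname{res}_p(\omega)=0$ for every $p$ (including the archimedean place), which by the previous paragraph is equivalent to solvability of all local embedding problems. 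This gives the claimed equivalence.

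The direction ``globally solvable implies locally solvable'' is essentially free, following at once from restriction; all the content sits in the converse, and the main obstacle -- the only genuinely nontrivial input -- is the Hasse principle invoked in the last paragraph, namely the injectivity of the localization map on $H^2(G_\qq,C)$. The remaining work is bookkeeping: matching the obstruction of the restricted local embedding problem (defined via the local image $G(p)$) with the honest localization of the global class, which is exactly the point where the centrality of $C$ in $\Gamma$ enters.
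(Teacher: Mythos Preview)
Your argument is correct and matches the paper's approach exactly: the paper does not write out a proof but attributes the lemma directly to the local-global principle for Brauer embedding problems \cite[Chapter IV, Cor.\ 7.8]{MM}, and your cohomological unpacking is the standard way to derive the statement from that reference. (The aside about a ``twist by a primitive $\ell$-th root of unity'' when $\ell$ is odd is a little loose---$\zz/\ell$ with trivial action is not $\mu_\ell$ over $\qq$, so the identification with Brauer-group torsion is not literal---but this is harmless since you ultimately invoke \cite[Chapter IV, Cor.\ 7.8]{MM}, which is formulated for general central abelian kernels.)
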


Since the local embedding problem at an unramified prime is always solvable, the above lemma gives an efficient method to check for global solvability of a given embedding problem by investigating only the finitely many ramified primes of $K/\mathbb{Q}$.

We also use the following criterion, see \cite[Prop.\ 2.1.7]{Serre}, in order to control ramification in solutions of embedding problems.
\begin{prop}
\label{prop:serre}
Let $\Gamma = C.G$ be a central extension of $G$ by a finite abelian group $C$, let $\varepsilon:\Gamma\to G$ be the canonical projection, and let $\varphi: G_{\mathbb{Q}}\to G$ be a continuous epimorphism such that the embedding problem $(\varphi,\varepsilon)$ has a solution. 
For each finite prime $p$, let $\tilde{\varphi_p}: G_{\mathbb{Q}_p}\to \Gamma$ be a solution of the associated local embedding problem $(\varphi_p,\epsilon_p)$, 
 chosen such that all but finitely many $\tilde{\varphi_p}$ are unramified. Then there exists a (not necessarily proper) solution $\tilde{\varphi}: G_{\mathbb{Q}}\to \Gamma$ of $(\varphi,\epsilon)$ such that for all finite primes $p$, the restrictions of $\tilde{\varphi}$ and $\tilde{\varphi_p}$ to the inertia group inside $G_{\mathbb{Q}_p}$ coincide. In particular, $\tilde{\varphi}$ is ramified exactly at those finite primes $p$ for which $\tilde{\varphi_p}$ is ramified.
\end{prop}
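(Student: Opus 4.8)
\emph{Proof plan.} Since the kernel $C$ is central in $\Gamma$, the conjugation action of $G$ (and hence of $G_{\qq}$ via $\varphi$) on $C$ is trivial, so $C$ is a trivial $G_{\qq}$-module. I would first exploit this to describe all solutions: the set of solutions of the embedding problem $(\varphi,\varepsilon)$ — nonempty by hypothesis — is a torsor under $H^1(G_{\qq},C)=\operatorname{Hom}(G_{\qq},C)$, the group of continuous homomorphisms. Indeed, if $\tilde\varphi_0$ is one global solution, then by centrality $g\mapsto \tilde\varphi_0(g)\chi(g)$ is again a homomorphism lifting $\varphi$ for every $\chi\in\operatorname{Hom}(G_{\qq},C)$, and conversely every solution arises this way; the same holds locally with $\operatorname{Hom}(G_{\qq_p},C)$. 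The plan is therefore to fix $\tilde\varphi_0$ and then correct it by a single global character $\chi$ whose restriction to each inertia group $I_p$ matches the prescribed local data.

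First I would record the local deviations. For each finite $p$, set $\chi_p:=\tilde\varphi_p\cdot(\tilde\varphi_0|_{G_{\qq_p}})^{-1}\in\operatorname{Hom}(G_{\qq_p},C)$ (well defined by centrality), and let $\psi_p:=\chi_p|_{I_p}\colon I_p\to C$. Because $\tilde\varphi_0$ is ramified at only finitely many primes and, by assumption, almost all chosen local solutions $\tilde\varphi_p$ are unramified, $\psi_p$ is trivial for all but finitely many $p$. The statement then reduces to a purely abelian gluing problem: produce $\chi\in\operatorname{Hom}(G_{\qq},C)$ with $\chi|_{I_p}=\psi_p$ for every finite $p$. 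Given such a $\chi$, the twisted solution $\tilde\varphi:=\tilde\varphi_0\cdot\chi$ would satisfy $\tilde\varphi|_{I_p}=(\tilde\varphi_0|_{I_p})\psi_p=(\tilde\varphi_0|_{I_p})(\chi_p|_{I_p})=\tilde\varphi_p|_{I_p}$ for all $p$, which is exactly the assertion; the final ``ramified exactly where $\tilde\varphi_p$ is'' claim is then immediate, since ramification of an abelian character is detected on inertia.

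The heart of the argument is this gluing step, where class field theory over $\qq$ enters. Decomposing $C$ into its cyclic primary components reduces us to the case $C$ cyclic of prime power order, so I am constructing characters of prime-power order. By Kronecker--Weber (equivalently, the idele-class description of $G_{\qq}^{\ab}$), after dividing the ideles by the diagonal $\qq^{\times}$ one has $G_{\qq}^{\ab}\cong \rr_{>0}\times\prod_p \zz_p^{\times}$, and under this identification the inertia subgroup at $p$ is exactly the factor $\zz_p^{\times}$. A continuous homomorphism from the profinite group $\prod_p\zz_p^{\times}$ to the finite group $C$ is the same datum as an arbitrary family $(\psi_p)_p$ of local characters $\psi_p\colon\zz_p^{\times}\to C$ with $\psi_p$ trivial for almost all $p$, combined by $(u_p)_p\mapsto\prod_p\psi_p(u_p)$; crucially there is \emph{no} compatibility constraint linking distinct primes. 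Hence the family $(\psi_p)$ from the previous step directly yields the required global $\chi$.

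I expect the only genuinely subtle point to be precisely the absence of an obstruction in this gluing step. This is where working over $\qq$ is essential: over a general number field the global units $\mathcal{O}_K^{\times}$ would sit diagonally inside $\prod_p\zz_p^{\times}$ and impose relations, whereas over $\qq$ the only global units are $\pm 1$ and they are already absorbed when one factors out $\qq^{\times}$. It is also worth stressing that, because the plan prescribes \emph{only} the inertial (ramified) part $\psi_p$ and leaves the Frobenius direction and the behaviour at $\infty$ free, no Grunwald--Wang type phenomenon can occur, even at the prime $2$; the apparent danger of a local-global obstruction for characters simply does not materialize under inertia-only conditions.
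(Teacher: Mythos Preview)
Your argument is correct and is precisely the approach the paper has in mind: the paper does not give a proof of this proposition but cites Serre, \emph{Topics in Galois Theory}, Prop.~2.1.7, and adds in the subsequent remark that ``$\tilde{\varphi}$ is obtained by twisting an initial solution by a suitable $C$-extension of $\mathbb{Q}$'' --- exactly your torsor-under-$\operatorname{Hom}(G_{\mathbb{Q}},C)$ description followed by the class-field-theoretic gluing on inertia.

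One small cosmetic point: what you write as $G_{\mathbb{Q}}^{\ab}\cong \mathbb{R}_{>0}\times\prod_p \mathbb{Z}_p^{\times}$ is really the idele class group; the Galois group itself is $\hat{\mathbb{Z}}^{\times}=\prod_p\mathbb{Z}_p^{\times}$ (the connected component $\mathbb{R}_{>0}$ is killed by the reciprocity map). This does not affect your argument, since any continuous homomorphism to a finite $C$ is trivial on $\mathbb{R}_{>0}$ anyway, and the identification of inertia at $p$ with the factor $\mathbb{Z}_p^{\times}$ is what does the work.
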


\begin{remark}
\label{rem:serre}
\begin{itemize}
\item[a)]
Concretely, $\tilde{\varphi}$ is obtained by twisting an initial solution by a suitable $C$-extension of $\mathbb{Q}$. Notably, in the special case $|C|=2$, if $F$ denotes the fixed field of $\ker(\varphi)$ and $F(\sqrt{x})$ is a solution field to the embedding problem (with $x\in F$), then all other solution fields are of the form $F(\sqrt{ax})$ with $a\in \mathbb{Q}^\times$. This is a special case of a more general result for Brauer embedding problems, cf.\ \cite[Chapter IV, Thm.\ 7.2]{MM}.
\item[b)] While Proposition \ref{prop:serre} does not a priori promise {\it proper} solutions (i.e., corresponding to Galois realizations with the full group $\Gamma$) to the embedding problem, this additional property is automatic in the case of stem extensions $C.G$ relevant for us, since in this case there is no proper subgroup projecting onto $G$. \end{itemize}
\end{remark}

We are now ready to state Theorem \ref{thm:spec_cover}, guaranteeing infinitely many specializations of the polynomial in Theorem \ref{thm:3point} whose splitting fields embed into $4.M_{22}$-extension.

\begin{thm}
\label{thm:spec_cover}
Let $g(s,X) =256(X^2-7X+15)^5 (X^2+15X+180)^5(X^2+4X+400)- 3^3\cdot 5^4\cdot 11^{10}(s^2+55)X^6(X-4)^4$ and $E/\mathbb{Q}(s)$ be a splitting field of $g$. Then among the specializations $E_{s_0}/\mathbb{Q}$, $s_0\in \mathbb{Q}$, there are infinitely many linearly disjoint ones whose Galois group equals $M_{22}$ and which embed into an extension $L/\mathbb{Q}$ with Galois group $4.M_{22}$. 
\end{thm}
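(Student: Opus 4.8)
The plan is to produce specializations of the $M_{22}$-extension $E/\mathbb{Q}(s)$ for which the central embedding problem with kernel $C_4$ (equivalently, since $C_4$ is cyclic, a sequence of embedding problems with kernel $C_2$) becomes solvable over $\mathbb{Q}$, and to arrange this for infinitely many linearly disjoint fields. Because $4.M_{22}$ is a central extension by a cyclic group, I would first reduce to the kernel-$C_2$ case: the extension $4.M_{22} \to M_{22}$ factors through $2.M_{22}$, and one solves the associated embedding problems one cyclic layer at a time, applying the local-global principle of Lemma \ref{lem:prime_kernel} at each stage. By that lemma, solvability over $\mathbb{Q}$ is equivalent to local solvability at every prime, and since unramified primes are automatically fine, the content is entirely at the (finitely many) ramified primes of each specialization $E_{s_0}/\mathbb{Q}$.

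The key mechanism is to control the ramified primes using Beckmann's theorem and its Corollary \ref{cor:beck}. First I would exhibit one reference specialization $E_{s_0}/\mathbb{Q}$ with Galois group $M_{22}$ whose ramification is confined to a small, explicitly controllable set $S$ of primes; this is the kind of explicit computation the paper advertises, and one checks directly (using the factorization data already produced for $f$) that the embedding problem with kernel $C_4$ is solvable there, i.e. that every local embedding problem at the primes of $S$ is solvable. Crucially, the inertia generators over the branch points of $E/\mathbb{Q}(s)$ have orders $5,5,6$ (the $(5A,5A,6A)$ type from Theorem \ref{thm:3point}(b)), so by Theorem \ref{thm:beck} the inertia group at any ramified prime $\mathfrak{p}$ (outside the finite exceptional set and outside $S$) of a nearby specialization is contained in a cyclic group generated by a power of one of these inertia generators; in particular its order divides $5$ or $6$. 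Since $\gcd(\mathrm{ord}(\sigma_i),2)=1$ for the order-$5$ classes, the only inertia elements of even order arising come from the $6A$-branch point, and one uses Proposition \ref{prop:split_subgps}(b) together with the self-splitting of the relevant cyclic subgroups in $2.M_{22}$ to verify that the local embedding problem is automatically solvable at such primes. Here the order-$6$ subgroups being split in $12.M_{22}$ (hence in $4.M_{22}$) by Proposition \ref{prop:split_subgps}(a) is exactly what makes the new inertia harmless.

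I would then invoke Corollary \ref{cor:beck}: choosing $s_1$ in a suitable $S_0$-adic neighborhood of $s_0$ (where $S_0$ is Beckmann's exceptional set together with $S$) guarantees, via Krasner's lemma, that $E_{s_1}/\mathbb{Q}$ has the same local behavior as $E_{s_0}/\mathbb{Q}$ at all primes of $S_0$, while every \emph{newly} ramified prime has inertia controlled as above and hence poses no local obstruction. Combining the two, every local embedding problem for $E_{s_1}/\mathbb{Q}$ is solvable, so by Lemma \ref{lem:prime_kernel} the global embedding problem with kernel $C_2$ is solvable; iterating once more through the second $C_2$-layer (using that the inertia orders are again coprime to $2$ away from the $6A$-point, whose contribution splits in $4.M_{22}$) gives solvability for the full kernel $C_4$, so $E_{s_1}/\mathbb{Q}$ embeds into a $4.M_{22}$-extension. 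To obtain \emph{infinitely many linearly disjoint} such fields, I would note that the neighborhood condition leaves infinitely many admissible $s_1$, and that one can successively enlarge the set of primes being pinned down (or use a Hilbert-irreducibility/Chebotarev argument to force distinct ramification or distinct Frobenius data) so that the resulting $M_{22}$-fields are pairwise linearly disjoint; that each specialization genuinely has full group $M_{22}$ (rather than a proper subgroup) follows from choosing $s_1$ outside the thin exceptional set where the specialization degenerates.

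The main obstacle I anticipate is the verification at the single $6A$-branch point: the order-$6$ inertia is the only even-order contribution, and one must confirm both that the reference specialization is genuinely solvable at the primes in $S$ (requiring the explicit local computation) and that the split-subgroup condition of Proposition \ref{prop:split_subgps}, translated through Proposition \ref{prop:voe}-style splitting of the central kernel, really does kill the local obstruction at every newly ramified prime whose inertia has order divisible by $2$. Everything else — the odd-order branch points, linear disjointness, properness of the solution — is comparatively routine given the cited results.
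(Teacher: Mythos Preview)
Your framework matches the paper's --- two kernel-$C_2$ steps via Lemma~\ref{lem:prime_kernel}, with inertia at new primes controlled by Beckmann --- but there are real gaps in the execution. The main one concerns the $6A$ branch point. Beckmann bounds only the \emph{inertia} group at a newly ramified prime, not the full decomposition group, and splitting of a cyclic order-$6$ subgroup in $4.M_{22}$ (this is Proposition~\ref{prop:split_subgps}(a); part (b) is about the order-$1344$ subgroup and, like V\"olklein's Proposition~\ref{prop:voe}, plays no role in this specialization argument) does not by itself solve the local embedding problem when the decomposition group is strictly larger. The paper avoids this issue altogether: the $6A$ branch point lies at $s=\infty$, and by choosing $s_1$ of non-negative $p$-adic valuation at every non-exceptional prime (the second clause of Corollary~\ref{cor:beck}), no such prime meets $\infty$, so all new inertia has order dividing $5$ and the odd-inertia argument applies uniformly. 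The one prime needing even-order analysis is $p=3$: the paper exhibits specializations unramified at every other finite prime (using several reference values $s(p)$, not a single $s_0$), but for $3$ it takes $s(3)=0$ and carries out an explicit Newton polygon computation on a degree-$8$ factor of $g(0,X)$ to show that the full \emph{decomposition} group at $3$ has order dividing $6$, whence it splits and the trivial lift is available.

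The second gap is in the passage to the second $C_2$-layer. You cannot simply iterate: the first solution $F\supset E_{s_1}$ may acquire extra ramification in the quadratic step, destroying the inertia bookkeeping for the next layer. The paper invokes Proposition~\ref{prop:serre} to select the $2.M_{22}$-solution so that no finite prime ramifies in $F/E_{s_1}$; only then is the decomposition group at $3$ in $F/\mathbb{Q}$ known to lie in $S_3$ or $S_3\times C_2$ (with the $C_2$ factor necessarily unramified by construction), after which a short direct argument solves the local problem at $3$ for the second step. Without this use of Serre's proposition you have no control over the second embedding.
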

\begin{proof}
We will identify, for each prime $p$ of $\mathbb{Q}$ (including the infinite one), a specialization value $s(p)\in \mathbb{Q}$ such that the embedding problem given by $G_{\mathbb{Q}}\to \Gal(E_{s(p)}/\mathbb{Q})$ and $4.M_{22}\to M_{22}$ induces a solvable local embedding problem at $p$.
To this aim, note first that for all rational primes $p$ other than $3$ and $\infty$, $g(s,X)$ has a specialization $g(s(p),X)$ whose splitting field $E_{s(p)}/\mathbb{Q}$ is unramified at $p$. Indeed, $s_0=35$ gives a field discriminant divisible only by $3, 5, 11$, whereas $s_0=11^{-5}$ gives an extension unramified at $11$ and $s_0=5^{-2}$ gives one unramified at $5$ (field discriminants were verified with Magma (\cite{Magma})). So for all these primes $p$ and corresponding values $s(p)$, local solvability is automatic.
For $p=\infty$, there is nothing to show since all order $2$ subgroups of $M_{22}$ are split by the central subgroup of order $4$ in $4.M_{22}$, whence the trivial solution of the local embedding problem is available (for any $s(p)\in \mathbb{Q}$).

Finally, for $p=3$, we claim that $s(p)=0$ yields a decomposition group of order dividing $6$ at $p$. Since all order $6$ subgroups of $M_{22}$ are split in $4.M_{22}$ as well (see Proposition \ref{prop:split_subgps}), the corresponding local embedding problem is then solvable via the trivial solution once again. To show the claim, note that $s=0$ is the unique point over the branch point of class $2B$ in the $\Aut(M_{22})$-extension $E/\mathbb{Q}(t)$ given by Theorem \ref{thm:3point}a). In particular, $E/\mathbb{Q}(s)$ is unramified at $s\mapsto 0$, with decomposition group $\Gal(E_0/\mathbb{Q})$ contained in $U:=C_{\Aut(M_{22})}(\sigma) \cap M_{22}$, where $\sigma\in 2B$. This intersection is a maximal subgroup of $M_{22}$ isomorphic to $\AGL_3(2) = 2^3.\PSL_3(2)$. Furthermore $E_0$ must contain all roots of $g(0,X)$. This polynomial has a degree-$8$ factor $\tilde{g}(X)=X^8 + 38X^7 + 783X^6 + 9795/2X^5 - 42189/2X^4 - 1312335/2X^3 +
    74121075/16X^2 - 23674275/2X + 12301875$ whose Galois group already equals $\AGL_3(2)$, so $E_0/\mathbb{Q}$ must equal the splitting field of $\tilde{g}(X)$. To gain information about the behavior of the prime $p=3$ in $E_0$, we may draw the Newton polygon of $\tilde{g}$, which is defined as the lower convex hull of the set of points $(j, \nu(c_j))$, $j=0,\dots, 8$, where $c_j$ denotes the coefficient of $\tilde{g}$ at $X^j$ and $\nu$ means $3$-adic valuation (see, e.g., \cite[Chapter II.6]{Neu} for background on the Newton polygon). This turns out to have line segments of length $4$, $3$ and $1$ with corresponding slopes  $-2$, $-1/3$ and $0$ respectively, see Figure \ref{fig:1}.
    
    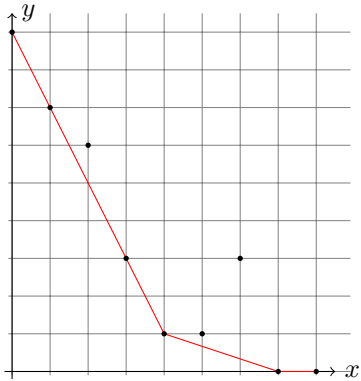
\begin{figure}[h!]
    \begin{tikzpicture}[scale=0.5]
    \draw[very thin,color=gray] (-0.1,-0.1) grid (8.9,9.5);    
    \draw [->](0,-0.2)--(0,9.5) node[right]{$y$};
\draw [->](-0.2,0)--(8.5,0) node[right]{$x$};
   \draw [thin,color=red] (0,9)--(4,1);
   \draw [thin,color=red] (4,1)--(7,0);
   \draw [thin,color=red] (7,0)--(8,0);
   \fill [black](0,9) circle(2pt);
   \fill [black](1,7) circle(2pt);
   \fill [black](2,6) circle(2pt);
   \fill [black](3,3) circle(2pt);
   \fill [black](4,1) circle(2pt);
   \fill [black](5,1) circle(2pt);
   \fill [black](6,3) circle(2pt);
   \fill [black](7,0) circle(2pt);
   \fill [black](8,0) circle(2pt);
   \end{tikzpicture}
\caption{Newton polygon for the polynomial $\tilde{g}$ at $p=3$.}
    \label{fig:1}
    \end{figure}

     It follows that the decomposition group of $E_0/\mathbb{Q}$ at $3$ (viewed as a subgroup of $\AGL_3(2)\le S_8$) has at least one fixed point (corresponding to the segment of length $1$) and at least one orbit of length $3$ (due to the slope $-1/3$). It turns out that the only subgroups of $\AGL_3(2)$ with such an orbit structure are isomorphic to $C_3$, $S_3$, $A_4$ or $S_4$. 
    But the latter two can never be decomposition groups at the prime $3$, since they do not have a nontrivial normal $3$-subgroup (taking the role of wild inertia) whose quotient is metacyclic. The decomposition group must therefore be $C_3$ or $S_3$. 
       This completes the proof of the claim. 
 
Now to show solvability of the global embedding problem, we choose specialization values $s_1\in \mathbb{Q}$ ``sufficiently close" $p$-adically to $s(p)$, for all the (finitely many) primes $p$ which are bad primes for the extension $E/\mathbb{Q}(s)$ in the sense of being exceptional for Theorem \ref{thm:beck}, and $s_1$ of non-negative $p$-adic valuation for all other primes $p$.
Krasner's lemma implies that for all bad primes $p$, the local behaviors of $E_{s(p)}/\mathbb{Q}$ and $E_{s_1}/\mathbb{Q}$ at $p$ are the same, and in particular, do not obstruct the embedding problem. By Theorem \ref{thm:beck} and Corollary \ref{cor:beck}, all {\it other} primes can only ramify in $E_{s_1}/\mathbb{Q}$ with an inertia group contained in the inertia group at a finite branch point of $E/\mathbb{Q}(s)$, i.e., one of order $5$. But it is easy to see that a central embedding problem with kernel $C_2$ is always locally solvable at $p$ as soon as the inertia group at $p$ is of odd order (indeed, if the local embedding problem is split, then there is nothing to show, and if it is non-split, then extending the maximal unramified $2$-extension by a degree $2$ yields a solution).
 
We now decompose the embedding problem with kernel $C_4$ into two subsequent problems with kernel $C_2$. By Lemma \ref{lem:prime_kernel}, the first one is solvable by the above. Furthermore, by Proposition \ref{prop:serre} together with the fact that all the inertia groups split in $2.M_{22}$, the solution may be chosen 
such that no finite primes ramify in the quadratic extension corresponding to the kernel

 Let $F\supset E_{s_1}\supset \mathbb{Q}$ be such a solution field (in particular, $\Gal(F/\mathbb{Q}) = 2.M_{22}$). It remains to show that the local extensions $F_p/\mathbb{Q}_p$ at all ramified primes $p$ in this first solution do not obstruct the second embedding step. 
 Since all ramified primes other than $p=3$ are of odd ramification index (namely, $5$), local solvability is still guaranteed as above. 
 For $p=3$, the decomposition group in $F/\mathbb{Q}$ is either contained in $S_3$ (which splits in $4.M_{22}$), or in $S_3\times C_2$ (which extends to $S_3\times C_4$ in $4.M_{22}$). In the latter case, by construction, the second factor $C_2$ must correspond to the full unramified extension, whence extending this quadratic unramified extension of $\mathbb{Q}_3$ to the degree-$4$ one solves the local embedding problem. 
 Hence the second embedding step is solvable as well.

Since the specialization values $s_1\in \mathbb{Q}$ admissible for the above form an $S$-adically open set for $S$ the finite set of bad primes of $E/\mathbb{Q}(s)$, the well-known compatibility of Hilbert's irreducibility theorem with weak approximation (e.g., \cite[Proposition 2.1]{PV}) instantly yields infinitely many linearly disjoint extensions $E_{s_1}/\mathbb{Q}$ with full Galois group $M_{22}$ and embedding into $4.M_{22}$-extensions.
\end{proof}

\subsection{The main result}
\label{sec:mainres}
We are ready to show the following concrete version of Theorem \ref{thm:main}.
\begin{thm}
\label{thm:3pointd}
Let $L$, $\mathbb{Q}(t)$ and $\mathbb{Q}(s)$ be as in Theorem \ref{thm:3point}. There exist 
 extensions $\mathbb{Q}(u)/\mathbb{Q}(s)$ and $\mathbb{Q}(w)/\mathbb{Q}(t)$ of rational function fields such that 
 \begin{itemize}
 \item[a)]
 $L(u)/\mathbb{Q}(u)$ is $\mathbb{Q}$-regular of group $M_{22}$ and can be embedded into a $\mathbb{Q}$-regular Galois extension with group $12.M_{22}$, the full covering group of $M_{22}$.
 \item[b)] $L(w)/\mathbb{Q}(w)$ is $\mathbb{Q}$-regular of group $\Aut(M_{22})$ and can be embedded into a $\mathbb{Q}$-regular Galois extension with group $2.\Aut(M_{22})$, \underline{any} full covering group of $\Aut(M_{22})$.
 \end{itemize}
\end{thm}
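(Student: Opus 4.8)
The plan is to handle the two covering groups through their kernels. For part a), the arithmetic $12=4\cdot 3$ with $\gcd(4,3)=1$ lets me write the full cover as a fibre product $12.M_{22}=4.M_{22}\times_{M_{22}}3.M_{22}$, so it suffices to realize $4.M_{22}$ and $3.M_{22}$ regularly over a common rational function field sharing the same underlying $M_{22}$-extension, and then to take their compositum. The odd part $3.M_{22}$ I would extract from Feit's criterion (Proposition \ref{prop:feit}): applied to the Frattini, center-free extension $3.\Aut(M_{22})\to\Aut(M_{22})$ together with the rigid triple $(2B,5A,12A)$ of Theorem \ref{thm:3point}a), it gives a $\mathbb{Q}$-regular $3.\Aut(M_{22})$-extension over $\mathbb{Q}(t)$, which over the quadratic subfield $\mathbb{Q}(s)=L^{M_{22}}$ becomes a $\mathbb{Q}$-regular $3.M_{22}$-extension lifting the ramification $(5A,5A,6A)$ of Theorem \ref{thm:3point}b). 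Here one must verify that the three rational classes lift to rational classes of $3.\Aut(M_{22})$ obeying the splitting pattern $f_1=1,\ f_2=3$ demanded by Proposition \ref{prop:feit}; this is a finite character-table check.

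The even part $4.M_{22}$ is where Theorem \ref{thm:spec_cover} does the real work. Because the coprimality hypothesis of Völklein's criterion (Proposition \ref{prop:voe}) fails at the even branch class $6A$, I would instead regularize the $C_4$-embedding through Mestre's criterion (Proposition \ref{prop:hwi}), which treats a kernel of order two. Writing $C_4$ as the tower $M_{22}\leftarrow 2.M_{22}\leftarrow 4.M_{22}$, I would apply Proposition \ref{prop:hwi} twice: the infinitely many linearly disjoint specializations of Theorem \ref{thm:spec_cover} embed into $4.M_{22}$ and \emph{a fortiori} into $2.M_{22}$, furnishing the solvable non-branch specialization needed for the first layer and producing a $\mathbb{Q}$-regular $2.M_{22}$-extension over some $\mathbb{Q}(u_1)/\mathbb{Q}(s)$ of degree $\le 16$; the same family, now read off the $2.M_{22}$-extension, feeds the second layer and yields a $\mathbb{Q}$-regular $4.M_{22}$-extension over a field $\mathbb{Q}(u)$. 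Base-changing the $3.M_{22}$-extension along $\mathbb{Q}(u)/\mathbb{Q}(s)$ leaves its group intact, since $M_{22}$ and hence $3.M_{22}$ has no proper subgroup of index $\le 16$ (cf.\ Remark \ref{rem:hwi}), so the compositum is the sought $\mathbb{Q}$-regular $12.M_{22}$-extension.

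For part b) I would again invoke Mestre's criterion, now on the three-branch-point $\Aut(M_{22})$-extension $L/\mathbb{Q}(t)$ with kernel $C_2$. The single input required is a non-branch point $t_0\in\mathbb{Q}$ whose specialization embeds into $2.\Aut(M_{22})$, and I would produce it by the method of Theorem \ref{thm:spec_cover}: by the local-global principle of Lemma \ref{lem:prime_kernel} it suffices to solve every local problem, and at any prime of odd inertia this is automatic. Using Theorem \ref{thm:beck} and Corollary \ref{cor:beck} I would choose $t_0$ so that the only even ramification stems from the class $2B$, whose decomposition group lands in the maximal subgroup $\AGL_3(2)$ of order $1344$; by Proposition \ref{prop:split_subgps}b) this subgroup splits in $2.M_{22}$, hence in $2.\Aut(M_{22})$, so that prime is harmless as well. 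Mestre's criterion then delivers the $\mathbb{Q}$-regular $2.\Aut(M_{22})$-extension over $\mathbb{Q}(w)$; since the argument uses only the uniqueness of the stem cover of $M_{22}$, it applies verbatim to either of the two double covers of $\Aut(M_{22})$.

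The main obstacle I anticipate lies in the even part of a): Proposition \ref{prop:hwi} regularizes only a kernel of order two, so reaching $C_4$ forces a two-step iteration, whose delicate aspects are (i) keeping the intermediate $2.M_{22}$-extension within the five-branch-point bound required for the second application — which I would try to arrange by choosing the first degree-$\le 16$ translate to ramify economically over the three original branch points, in the spirit of Remark \ref{rem:hwi} and the disjointness statement it cites — and (ii) ensuring that the good specializations of Theorem \ref{thm:spec_cover} persist, as solvable non-branch points, on the $2.M_{22}$-extension driving the upper layer. Both rest on the local solvability at $p=3$ controlling the $6A$-ramification, which is exactly the technical core already secured in Theorem \ref{thm:spec_cover}.
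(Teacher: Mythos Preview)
Your treatment of the $3.M_{22}$ step via Feit's criterion and of part b) via a single application of Mestre is essentially what the paper does (for b) the paper is even more direct: it simply reuses the specializations $s\mapsto s_0$ of Theorem~\ref{thm:spec_cover}, which are automatically specializations $t\mapsto t(s_0)$ of the three-point $\Aut(M_{22})$-cover with Galois group contained in $M_{22}$, so the $2.M_{22}$-solvability feeds Proposition~\ref{prop:hwi} directly).

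The real divergence is in how you reach $4.M_{22}$. Your plan is to apply Mestre twice, and you correctly flag the obstacle: after the first degree-$\le 16$ pullback, the resulting $2.M_{22}$-extension over $\mathbb{Q}(u_1)$ will in general have far more than five branch points (the three branch points of $L/\mathbb{Q}(s)$ can blow up to as many as $48$ under a degree-$16$ base change, and the quadratic step to $2.M_{22}$ may add more), so Proposition~\ref{prop:hwi} is not available for the second layer. Nothing in Mestre's construction, or in Remark~\ref{rem:hwi}, lets you ``arrange the translate to ramify economically'' down to five; that remark concerns linear disjointness, not branch-point count. As stated, this is a genuine gap.

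The paper avoids the double-Mestre problem by a different first step. The key trick you are missing is a preliminary quadratic base change $v=\sqrt{as}$, totally ramified at $s=0$ and $s=\infty$: this squares the $6A$-inertia at $s=\infty$ to class $3A$, while the two $5A$ branch points each split in two, giving $L(v)/\mathbb{Q}(v)$ with exactly five branch points, \emph{all of odd order}. Now V\"olklein's criterion (Proposition~\ref{prop:voe}) \emph{does} apply for the first $C_2$, using that the decomposition group at $v=0$ sits in the order-$1344$ subgroup $U\cong\AGL_3(2)$, which splits in $2.M_{22}$ by Proposition~\ref{prop:split_subgps}b), and that $v=0$ is a degree-$1$ place of $L(v)^U$. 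A field-crossing (twisting) argument then forces the resulting $2.M_{22}$-extension $F/\mathbb{Q}(v)$ to have the \emph{same} five branch points. With five branch points in hand, a \emph{single} application of Mestre to $4.M_{22}\to 2.M_{22}$ finishes the $C_4$ step; the required solvable fibre comes from Theorem~\ref{thm:spec_cover}, made to land at a rational $v_0$ by choosing the constant $a$ and twisting $F$ by a suitable $\mathbb{Q}(\sqrt d)$. So the paper's route is: quadratic twist $\Rightarrow$ V\"olklein for the first $C_2$ $\Rightarrow$ one Mestre for the second $C_2$, rather than two Mestres.
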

\begin{proof}
Let $\mathfrak{p}$ be the (degree $1$) place of $\mathbb{Q}(t)$ of ramification index $2$ in $L$, and $\mathfrak{q}$ the unique place of $\mathbb{Q}(s)$ extending $\mathfrak{p}$. Since the class $2B$ lies outside of $M_{22}$, the extension $L/\mathbb{Q}(s)$ is unramified at $\mathfrak{q}$. Furthermore, the decomposition group at (any place extending) $\mathfrak{q}$ is contained in $U:=C_{\Aut(M_{22})}(\sigma) \cap M_{22}$, where $\sigma\in 2B$. As seen before, this intersection is a maximal subgroup of $M_{22}$ of order $1344$, isomorphic to $\AGL_3(2) = 2^3.\PSL_3(2)$. By Proposition \ref{prop:split_subgps}, the preimage of $U$ in $2.M_{22}$ splits the central subgroup $C_2$.
Now assume without loss that $\mathfrak{q}$ is the place $s\mapsto 0$, the unique ramified place of class $6A$ is $s\mapsto \infty$, and set $v:=\sqrt{as}$ (for any $a\in \mathbb{Q}^\times$ of our choice). Obviously $\mathbb{Q}(v)/\mathbb{Q}(s)$ is totally ramified of degree $2$ at $s\mapsto 0$ and $s\mapsto \infty$ (and linearly disjoint from $L/\mathbb{Q}(s)$). Thus, $L(v)/\mathbb{Q}(v)$ is still $\mathbb{Q}$-regular of group $M_{22}$, with four ramified places of ramification index $5$ and one of ramification index $3$, and with decomposition group at $v\mapsto 0$ still contained in $U$.
It thus follows from Proposition \ref{prop:voe} that $L(v)/\mathbb{Q}(v)$ embeds into a $\mathbb{Q}$-regular extension $F/\mathbb{Q}(v)$ with group $2.M_{22}$. 
We may assume that $F/\mathbb{Q}(v)$ is unramified outside of the set of branch points of $L(v)/\mathbb{Q}(v)$. Indeed, if it acquires extra ramification, then the following field crossing argument - a concrete case of the twisting in Remark \ref{rem:serre}a) - gets rid of this ramification by changing the quadratic extension of $L(v)$:

Let $M=\mathbb{Q}(v, \sqrt{p(v)})$, where $p(X)\in \mathbb{Q}[X]$ is the product of all linear factors $X-v_i$, where $v_i\in \overline{\mathbb{Q}}$ is a branch point of $L(v)/\mathbb{Q}(v)$ over which $F/L(v)$ is ``newly" ramified. Define $\tilde{L}$ to be the compositum of $F$ and $M$. This has Galois group $2.M_{22}\times C_2$. Inside the socle $C_2\times C_2$ of this group, let $\Delta$ be the diagonal subgroup, and let $\tilde{L}^\Delta$ be its fixed field. Then $\tilde{L}^\Delta/\mathbb{Q}(v)$ is Galois with group $2.M_{22}$. Furthermore, all newly ramified places $v\mapsto v_i$ have inertia group $\Delta$ inside $\tilde{L}/\mathbb{Q}(v)$, and are thus unramified in $\tilde{L}^\Delta/\mathbb{Q}(v)$. Replace $F$ by $\tilde{L}^\Delta$, if necessary.

We may thus assume that $F/\mathbb{Q}(v)$ is a $\mathbb{Q}$-regular Galois extension of group $2.M_{22}$ and with five branch points. We now apply Proposition \ref{prop:hwi} with $G=2.M_{22}$ and $\Gamma = 4.M_{22}$. All that is required is to provide one specialization $v\mapsto v_0\in \mathbb{Q}$ for which $F_{v_0}/\mathbb{Q}$ has full Galois group and for which the induced embedding problem is solvable over $\mathbb{Q}$. This is done by Theorem \ref{thm:spec_cover}, which a priori provided such specializations $s\mapsto s_0$ for the $M_{22}$-extension $L/\mathbb{Q}(s)$. Note however, that we have the freedom to choose $a\in \mathbb{Q}^\times$ in the definition of $v=\sqrt{as}$, and we may now simply choose it such that $v_0:=\sqrt{as_0}$ is rational, in which case $L(v)_{v_0}/\mathbb{Q} = L_{s_0}/\mathbb{Q}$. So we know that $F/\mathbb{Q}(v)$ has a fiber $F_{v_0}/\mathbb{Q}$ factoring through an $M_{22}$-extension as in the proof of Theorem \ref{thm:spec_cover}, and upon twisting $F/\mathbb{Q}(v)$ (in the sense of Remark \ref{rem:serre}a)) once more by a suitable quadratic {\it constant} extension $\mathbb{Q}(\sqrt{d})(v)/\mathbb{Q}(v)$, we may in fact assume that the whole fiber $F_{v_0}/\mathbb{Q}$ equals a $2.M_{22}$-extension as in the proof of Theorem \ref{thm:spec_cover} (in particular, one which embeds into a $4.M_{22}$-extension due to Theorem \ref{thm:spec_cover}, since the constant twist does not change the branch point locus). From Proposition \ref{prop:hwi}, we obtain a rational function field $\mathbb{Q}(u)\supseteq \mathbb{Q}(v)$ such that $L(u)/\mathbb{Q}(u)$ (still has full Galois group $M_{22}$ and) embeds into a $4.M_{22}$-extension.

To obtain the same for $3.M_{22}$, we embed the $\Aut(M_{22})$-extension $L/\mathbb{Q}(t)$ into an extension with group $3.\Aut(M_{22})$ (a centerless Frattini extension of $\Aut(M_{22})$) using Proposition \ref{prop:feit} as follows. Ordering the classes in the ramification type of $F/\mathbb{Q}(a)(t)$ as $(5A,2B, 12B)$, we may verify with the help of Magma that these classes lift to (unique) classes of elements of the same orders $5, 2$ and $12$ in $3.\Aut(M_{22})$ and such that, in the notation of Proposition \ref{prop:feit}, $f_1=1$ and $f_2=f_3=3$. Indeed, the elements of order $5$ and $3$ necessarily centralize the order-$3$ normal subgroup whence the conjugacy class length does not grow, whereas the elements of class $2B$ and $12A$ do not centralize it. Proposition \ref{prop:feit} is now applicable, showing that $L/\mathbb{Q}(t)$ embeds into a $\mathbb{Q}$-regular $3.\Aut(M_{22})$-extension. In particular, $L/\mathbb{Q}(s)$, and a fortiori the translate $L(u)/\mathbb{Q}(u)$, embeds into a $3.M_{22}$-extension. Altogether, we obtain a $12.M_{22}$-extension of $\mathbb{Q}(u)$. Regularity follows since all non-trivial normal subgroups of $12.M_{22}$ are contained in the center. This completes the proof of a).

Obtaining b) is now easy. Indeed, the above specializations of $L/\mathbb{Q}(s)$ rendering the $2.M_{22}$ embedding problem solvable are also specializations of the $\Aut(M_{22})$-extension $L/\mathbb{Q}(t)$. That one has only three branch points, so the conclusion follows readily from Proposition \ref{prop:hwi} (together with Remark \ref{rem:hwi}).
\end{proof}

\begin{remark}
I do not know whether the above approach could also succeed in full with the $M_{22}$-polynomial given by Malle in \cite{Malle88}. One potential bother comes from the fact that for that polynomial, all $M_{22}$-specializations are wildly ramified at $2$ (see \cite[Proposition 2.5]{PV}), which could in principle lead to the embedding problem with kernel $C_4$ being obstructed at the prime $2$ in every fiber. Whether this is indeed the case, I have not verified.
\end{remark}

\end{document}